\numberwithin{equation}{section}
\newtheorem{theorem}{Theorem}[section]
\newtheorem{claim}[theorem]{Claim}
\newtheorem{proposition}[theorem]{Proposition}
\newtheorem{lemma}[theorem]{Lemma}
\newtheorem{corollary}[theorem]{Corollary}
\newtheorem*{theorem*}{Theorem}
\newtheorem*{claim*}{Claim}
\newtheorem*{proposition*}{Proposition}
\newtheorem*{lemma*}{Lemma}
\newtheorem*{corollary*}{Corollary}
\theoremstyle{definition}
\newtheorem{definition}[theorem]{Definition}
\newtheorem{observation}[theorem]{Observation}
\newtheorem{remark}[theorem]{Remark}
\newtheorem{fact}[theorem]{Fact}
\newtheorem{notation}[theorem]{Notation}
\newtheorem{setup}[theorem]{Setup}
\newtheorem*{definition*}{Definition}
\newtheorem*{observation*}{Observation}
\newtheorem*{remark*}{Remark}
\newtheorem*{example*}{Example}
\newtheorem*{question*}{Question}
\newtheorem*{exercise*}{Exercise}
\newtheorem*{fact*}{Fact}
\newtheorem*{notation*}{Notation}
\newcommand{\bbN}{\mathbb{N}}
\newcommand{\bbZ}{\mathbb{Z}}
\newcommand{\frC}{\mathfrak{C}}
\newcommand{\calC}{\mathcal{C}}
\newcommand{\calL}{\mathcal{L}}
\newcommand{\calN}{\mathcal{N}}
\newcommand{\actson}{\curvearrowright}
\newcommand{\into}{\hookrightarrow}
\newcommand{\inj}{\hookrightarrow}
\newcommand{\ii}{^{-1}}
\newcommand{\tild}[1]{\widetilde{#1}}
\newcommand{\rquot}[2]{{\raisebox{-.2em}{$#1$}\left\backslash\raisebox{.2em}{$#2$}\right.}}
\DeclareMathOperator{\Stab}{Stab}
\DeclareMathOperator{\Aut}{Aut}
\DeclareMathOperator{\Cay}{Cay}
\DeclareMathOperator{\rk}{rk}
\title{Ascending Chains of Free Quasiconvex Subgroups}
\author{Jack Kohav and Nir Lazarovich\thanks{Supported by the Israeli Science Foundation (grant no. 1576/23). 
}}
\begin{document}
 \onehalfspacing
\maketitle
\begin{abstract}
    We prove that a hyperbolic group cannot contain a strictly ascending chain of free quasiconvex subgroups of constant rank. 
\end{abstract}

\section{Introduction}\label{sec: intro}



We say that a group satisfies \emph{ACCF} if every ascending chain of free subgroups of constant rank stabilizes.
Takahasi \cite{takahasi1951note} and Higman \cite{higman1951finitely} showed that free groups satisfy ACCF. Kapovich-Myasnikov \cite{kapovich2002stallings} gave a new proof of this result using Stallings' foldings. Shusterman \cite{shusterman2017ascending} proved that limit groups (and in particular surface groups) satisfy ACCF. Bering-Lazarovich \cite{bering2021ascending} proved that some 3-manifold groups satisfy ACCF. Kappvich \cite{kapovich2023ascending} gave a result with similar flavours in the setting of random groups.
 
In this paper we consider the ACCF property in Gromov hyperbolic groups \cite{gromov1987hyperbolic}. 
Note that hyperbolic groups do not always satisfy ACCF; Consider a hyperbolic ascending HNN extension of the free group, e.g. the group $G=\langle a,b,t|tat^{-1}=ab, tbt^{-1}=ba\rangle$. $H=\langle a,b\rangle\leq G$ is a free subgroup of rank 2, and $H<t^{-1}Ht<t^{-2}Ht^2<...$ is an ascending chain of free subgroups of rank 2. 

To remedy this, we consider only chains of quasiconvex free subgroups, and prove the following:

\begin{theorem}\label{thm: ascending quasiconvex stabilizes}
    Let $G$ be a hyperbolic group, then every ascending chain $\{H_i\}$ of free quasiconvex subgroups of constant rank stabilizes.
\end{theorem}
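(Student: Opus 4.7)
The plan is to adapt the Stallings-folding proof of the Takahashi--Higman theorem (as in Kapovich--Myasnikov) to the hyperbolic setting, using the convex hull of a quasiconvex subgroup in place of the Stallings graph. The rank $0$ and rank $1$ cases are straightforward: the first is trivial, and the second follows from the fact that every loxodromic element of a hyperbolic group lies in a unique maximal cyclic subgroup, which controls the chain via a decreasing sequence of divisibility indices. So assume $n \geq 2$ from now on.

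For each $H_i$, I will attach a finite ``core'' CW-complex $\Gamma_i$ playing the role of the Stallings graph. Concretely: work inside a simply-connected model space $X$ on which $G$ acts geometrically (for example a Rips complex $P_d(G)$ for $d$ large, whose cells carry canonical labels by words in a fixed generating set of $G$); let $\hat{C}_i \subseteq X$ be a simply-connected $H_i$-invariant thickening of the convex hull of the orbit $H_i \cdot e$; and set $\Gamma_i := \hat{C}_i / H_i$. Quasiconvexity of $H_i$ ensures that $H_i$ acts cocompactly and freely on $\hat{C}_i$, so $\Gamma_i$ is a finite labelled complex with $\pi_1(\Gamma_i) \cong H_i \cong F_n$.

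Choosing the $\hat{C}_i$ so that $\hat{C}_i \subseteq \hat{C}_{i+1}$ whenever $H_i \leq H_{i+1}$, one obtains a canonical label-preserving cellular map $\phi_i : \Gamma_i \to \Gamma_{i+1}$. The key step is to show that $\phi_i$ decomposes into elementary ``folds'' (identifications of same-label cells sharing a vertex), each of which preserves the first Betti number while strictly decreasing the number of vertices. Because the first Betti number is pinned to $n$ throughout the chain, no fold can reduce rank, and hence $|V(\Gamma_i)|$ forms a monotone non-increasing sequence of non-negative integers. Once it stabilizes, $\phi_i$ is an isomorphism of labelled complexes, which forces $H_i = H_{i+1}$ by reading off the loops that generate the fundamental groups as subgroups of $G$.

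The main obstacle will be canonicity and foldability of the core construction in the hyperbolic setting. Unlike in free groups, the thickening $\hat{C}_i$ depends on the quasiconvexity constant of $H_i$, so to make the $\hat{C}_i$ genuinely nested with well-defined folding maps we need a uniform bound on these constants across the chain (or an intrinsic substitute for the hull). Securing such a bound --- perhaps by leveraging the constant-rank hypothesis together with the free structure of the $H_i$ --- is the technical heart of the argument. As a back-up, one could try to combine Swenson's theorem (equal limit sets imply commensurability of quasiconvex subgroups) with the Nielsen--Schreier rank formula, reducing the problem to stabilization of the chain of limit sets $\Lambda(H_i) \subseteq \partial G$; however, this reformulation does not obviously simplify the essential difficulty.
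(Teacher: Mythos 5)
Your outline correctly identifies the right general strategy (attach a finite core to each $H_i$ via the hull of an orbit and compare consecutive cores), and you have put your finger on exactly the right obstruction --- but you have not overcome it, and that obstruction is the entire technical content of the theorem. The paper's proof hinges on first establishing that for each rank $r$ there are \emph{uniform} constants $K,C$ such that every quasiconvex free subgroup of rank $r$ admits a $(K,C)$-quasi-isometrically embedded tree (Theorem \ref{thm: quasiconvex is arboreal}); this occupies Sections \ref{sec: folding}--\ref{sec: uniform constants} and is proved by taking a core of minimal total edge-length, showing minimality forbids certain folds (replacing an edge by a shorter path, identifying vertices, or replacing a non-separating edge by a balloon), and then running a local-to-global argument on the decomposition of the core along its longest edge. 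Your proposal says this uniform bound should be securable ``perhaps by leveraging the constant-rank hypothesis together with the free structure of the $H_i$,'' but offers no mechanism; without it the thickenings $\hat{C}_i$ have no controlled size, the quotients $\Gamma_i$ do not range over a finite set, and the whole comparison collapses.

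There are two further gaps even granting uniform constants. First, your claim that the induced map $\phi_i:\Gamma_i\to\Gamma_{i+1}$ decomposes into label-preserving folds that preserve the first Betti number while strictly decreasing the vertex count is not available in a hyperbolic group: the map between cores is not cellular or label-preserving in any useful sense, only a quasi-isometry with uniform constants (Lemma \ref{lem: map between cores}). The paper therefore abandons the monotone-integer argument entirely and instead bounds the total edge-length $\sigma(\frC')$ of the larger core by $K'\sigma(\frC)+C'$, then invokes local finiteness of the Cayley graph to conclude there are only finitely many candidate subgroups (Lemma \ref{lem: not many small groups} and Theorem \ref{thm: main_plus}). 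Second, even in the free-group model your monotonicity requires $\phi_i$ to be surjective, which holds only after reducing to a chain in which each $H_i$ is not contained in a proper free factor of $H_{i+1}$ (and, for the quantitative version, in which $H_1$ is not contained in a proper free factor of any $H_i$); this reduction (Lemmas \ref{lem: ascneding not in free factors} and \ref{lem: first not free factor}) is absent from your write-up. Your rank $0$ and $1$ observations are fine but peripheral.
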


Let $G$ be a hyperbolic group, and let $X$ be the Cayley graph of $G$ with respect to some fixed finite generating set. Recall that $H$ is a quasiconvex subgroup of $G$ if and only if $H$ is finitely generated and the inclusion map $H\to G$ is a quasi-isometric embedding. When $H$ is free this can be quantified as follows:
\begin{definition}\label{def: arboreal}
    Let $K\ge 1,C\ge 0$. A finitely generated free subgroup $H$ of $G$ is \emph{$(K,C)$-arboreal}, if there exists a based metric tree $(T,\tild*)$ with a free $H$-action, and an $H$-equivariant $(K,C)$-quasi-isometric embedding $(T,\tild*)\to (G,1)$.
\end{definition}
Clearly, a free subgroup of a hyperbolic group is quasiconvex if and only if it is $(K,C)$-arboreal for some $K,C$.
Our main tool in the proof of \cref{thm: ascending quasiconvex stabilizes} is to establish uniform bounds for $(K,C)$ for free quasiconvex subgroups of the same rank.

\begin{theorem} \label{thm: quasiconvex is arboreal}
    Let $G$ be a hyperbolic group.
    For all $r\in \bbN$ there exist $K\ge 1,C\ge 0$ such that every quasiconvex free subgroup of $G$ of rank $r$ is $(K,C)$-arboreal.
\end{theorem}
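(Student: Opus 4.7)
The plan is, given a quasiconvex free subgroup $H \leq G$ of rank $r$, to choose a free basis $B = \{h_1, \ldots, h_r\}$ of $H$ that is ``Nielsen-reduced'' with respect to the ambient word length $|\cdot|_G$, and then to apply the local-to-global principle for quasi-geodesics in the $\delta$-hyperbolic Cayley graph $X = \Cay(G)$ to the metric Cayley tree $T = \Cay(H, B)$, in which the edge labeled $h_i$ is assigned length $|h_i|_G$ and mapped to a chosen $X$-geodesic representing $h_i$. The equivariant map $T \to X$ will then be a $(K, C)$-quasi-isometric embedding with $K$ and $C$ depending only on $r$ and the hyperbolicity data of $G$.

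First I would pick $B$ by minimizing the lexicographic tuple $(|h_1|_G, \ldots, |h_r|_G)$ over all ordered free bases of $H$; a minimizer exists since the tuples lie in $\bbN^r$. Because $h_j \mapsto h_j h_i^\epsilon$ (for $i \neq j$) is an elementary Nielsen transformation sending basis to basis, minimality forces $|h_i^\alpha h_j^\beta|_G \geq \max(|h_i|_G, |h_j|_G)$ for all $i \neq j$ and $\alpha, \beta \in \{\pm 1\}$. In the $\delta$-hyperbolic geometry of $X$, this bounds the Gromov product $(h_i^{-\alpha}, h_j^\beta)_1$ at the identity, i.e.\ the overlap of the two geodesics meeting at each corner of the broken geodesic associated to any reduced $B$-word in $X$.

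Next, I would upgrade this pairwise bound to a uniform additive overlap bound depending only on $r$ and $\delta$, by imposing stronger minimality conditions on $B$ (for instance, simultaneous lex-minimality of the lengths of all products of at most $k = k(r)$ basis elements) and exploiting $\delta$-hyperbolicity inductively. Once a uniform overlap bound $\theta_0 = \theta_0(r, \delta)$ is in place, the broken geodesic corresponding to any reduced $B$-word is a $(1, 2\theta_0 + c\delta)$-local quasi-geodesic at scale $\min_i |h_i|_G$, and the standard local-to-global theorem for $\delta$-hyperbolic spaces promotes it to a global $(K, C)$-quasi-geodesic, yielding the desired uniform arboreality.

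The main obstacle I expect is the step from pairwise to uniform additive cancellation bounds. The pairwise inequality $|h_i^\alpha h_j^\beta|_G \geq \max(|h_i|_G, |h_j|_G)$ controls the overlap at a single corner only by $\tfrac12 \min(|h_i|_G, |h_j|_G)$, which scales with the generators rather than being uniform. Producing an honest additive bound independent of $H$ seems to require both a stronger Nielsen condition on $B$ and careful iterative use of the thin-triangle inequality; the rank $r$ enters the final constants precisely through the depth of Nielsen moves one must consider to prevent cancellation from accumulating along longer subwords, and it is this combinatorial control that I expect to be the crux of the proof.
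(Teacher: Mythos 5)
There is a genuine gap, and it sits exactly where you predicted: the passage from pairwise Nielsen minimality to a uniform additive overlap bound is not merely hard, it is impossible for the model you chose. The underlying problem is that the wedge-of-circles model (the Cayley tree of $H$ with respect to a single basis, i.e.\ a core graph with one vertex) cannot realize uniform constants, no matter how cleverly the basis is chosen. Already in $G=F_3=\gen{a,b,c}$, take $H=\gen{a^nba^{-n},\,a^nca^{-n}}$. Every element of $H$ has the form $a^nwa^{-n}$ with $w\in\gen{b,c}$, so every basis element has length at least $2n+1$ and the lex-minimal basis is (up to inversion and reordering) exactly $h_1=a^nba^{-n}$, $h_2=a^nca^{-n}$; there is no ``stronger minimality condition'' left to impose. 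Yet the Gromov product at the corner of the broken geodesic $1\to h_1\to h_1h_2$ equals $n$: the point at distance $n$ before $h_1$ on $[1,h_1]$ and the point at distance $n$ after $h_1$ on $[h_1,h_1h_2]$ both land at $a^nb$, while they are $2n$ apart in the tree. So the equivariant map $\Cay(H,B)\to X$ fails to be a $(K,C)$-quasi-isometric embedding for any constants independent of $n$, even though $G$ is free and $H$ is $(1,0)$-arboreal via a different tree (the convex hull of the orbit of $1$, whose quotient is a rose with an attached arc of length $n$). The conclusion is that uniformity forces you to vary the \emph{graph}, not just the basis: you need general finite graphs with several vertices (Stallings cores), and moves that subdivide edges, identify vertices, and split off loops, which is precisely the folding machinery the paper builds in \S\ref{sec: folding}--\S\ref{sec: uniform constants}.

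A secondary issue, which would surface even for subgroups whose optimal core genuinely is a rose: your local-to-global step needs the edges (here, the $|h_i|_G$) to be long relative to $\delta$ and the overlap constant, and short edges cannot be absorbed into the additive constant without some further argument. The paper handles this by observing that cores all of whose edges are shorter than a threshold are finite in number up to the $G$-action (local finiteness of $X$), hence uniformly good, and then runs an induction on the number of edges, decomposing a size-minimal core along its longest (hence unfoldable) edge and applying a local-to-global argument to the two adjacent subcores (\cref{prop: constants exist}, \cref{cl: compatible lists}, \cref{lem: uniform-core-no-base}). Your overlap estimates at a single corner are in the right spirit --- they resemble \cref{lem: upperbound on gromov prod}, where minimality of an edge (no improving fold) replaces Nielsen minimality of a basis element --- but without the graph-theoretic flexibility of cores and folds, the argument cannot be completed.
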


The proof of this theorem is through a process of Stallings' folds for quasiconvex subgroups of hyperbolic groups. See also \cite{stallings1983topology,weidmann2024foldings,arzhantseva1996class,kapovich2004freely,bestvina1991bounding,dunwoody1998folding,kapovich2005foldings,weidmann2002nielsen}.
We show that if $H\le H'$ are $(K,C)$-arboreal, then there is $(K',C')$-quasi-isometric map between the core of $H$ to the core of $H'$. If in addition $H$ is not contained in a free factor of $H'$ we use this map to bound the size of the core of $H'$:

\begin{theorem}\label{thm: main_plus}
Let $G$ be a hyperbolic group, let $r\in \bbN$ and let $H\le G$ be a quasiconvex free subgroup. Then there are finitely many free quasiconvex subgroups $H\le H'\le G$ of rank $r$ in which $H$ is not contained in a free factor.
\end{theorem}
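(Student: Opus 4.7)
By \cref{thm: quasiconvex is arboreal}, there exist uniform constants $K\ge 1$, $C\ge 0$ such that every rank-$r$ free quasiconvex $H'\le G$ admits an $H'$-equivariant $(K,C)$-quasi-isometric embedding $\phi_{H'}\colon T_{H'}\to (G,1)$, where $T_{H'}$ is some metric tree with free $H'$-action. The given $H$ admits a similar structure $\phi_H\colon T_H\to (G,1)$ with some fixed constants $(K_0, C_0)$. For any such $H'$ containing $H$, the quasiconvexity of $\phi_{H'}(T_{H'})$ in $G$ yields an $H'$-equivariant coarse Lipschitz retraction $G\to T_{H'}$, and postcomposing $\phi_H$ with it produces an $H$-equivariant quasi-isometric map $f\colon T_H\to T_{H'}$ whose constants depend only on $K$, $C$, $K_0$, $C_0$. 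A standard argument (using $H$-equivariance, coarse connectedness of the image, and the bounded fundamental domain of $T_H/H$) shows that the image of $f$ lies within uniformly bounded Hausdorff distance of the minimal $H$-invariant subtree $S_H\subseteq T_{H'}$.

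The crucial step is to show that, under the hypothesis that $H$ is not contained in any proper free factor of $H'$, the subtree $S_H$ meets every $H'$-orbit of edges of $T_{H'}$. One may arrange (up to a bounded QI modification absorbed into the constants) that $T_{H'}$ is a Cayley-type tree of $H'$ for some basis $b_1,\ldots,b_r$; then, in Stallings/Bass--Serre fashion, if $S_H$ contained no edge from the $H'$-orbit of the $b_i$-edge, every hyperbolic axis of an element of $H$ would avoid $b_i$, forcing $H\le\langle b_1,\ldots,\widehat{b_i},\ldots,b_r\rangle$, a proper free factor. Consequently, the descent $\bar f\colon T_H/H\to T_{H'}/H'$ is coarsely surjective as well as coarsely Lipschitz. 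Since $T_H/H$ is a compact metric graph of diameter bounded in terms of $H$ and $(K_0,C_0)$, coarse surjectivity yields a uniform bound on the diameter of $T_{H'}/H'$; as this quotient is a finite graph of rank $r$, each edge length is then also uniformly bounded.

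Pulling back via $\phi_{H'}$, we obtain a uniform bound $R$ on the $G$-word length of a basis of $H'$. Since $B_G(R)$ is finite, there are finitely many $r$-tuples in $B_G(R)$, and hence finitely many possible $H'$. The main obstacle I anticipate is the identification in the middle paragraph of ``$H$ is not contained in a free factor of $H'$'' with ``$S_H$ meets every $H'$-orbit of edges of $T_{H'}$''; this is a Bass--Serre/Stallings-style argument that requires $T_{H'}$ to carry a basis structure, but the required adjustment from the given arboreal tree to a Cayley-type tree is itself a bounded QI and only inflates the uniform constants. A secondary technical point is arranging the coarse inverse $G\to T_{H'}$ to be coarsely $H'$-equivariant, which is standard given the equivariance of the arboreal embedding and quasiconvexity of its image.
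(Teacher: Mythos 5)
Your overall architecture coincides with the paper's: uniform constants from \cref{thm: quasiconvex is arboreal}, an $H$-equivariant quasi-isometric comparison map from the tree of $H$ to the tree of $H'$ built by (equivariant) closest-point projection onto the quasiconvex image (this is the paper's \cref{lem: map between cores}), surjectivity of the descended map extracted from the hypothesis that $H$ is not in a proper free factor, a resulting uniform bound on the size of the quotient graph of $H'$, and finiteness via local finiteness of the Cayley graph (the paper's \cref{lem: not many small groups}). The concluding counting step is essentially identical to the paper's.

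The one step that does not work as written is the reduction ``one may arrange (up to a bounded QI modification absorbed into the constants) that $T_{H'}$ is a Cayley-type tree of $H'$ for some basis.'' Turning the arboreal tree into a Cayley tree means replacing the quotient graph $T_{H'}/H'$ by a rose, e.g.\ by collapsing a spanning tree; the equivariant quasi-isometry this induces on universal covers has constants controlled by the edge lengths and diameter of $T_{H'}/H'$ --- exactly the quantities you are in the middle of bounding --- so ``absorbed into the uniform constants'' is circular at that point. The gap is fixable without the Cayley structure: either (i) run the Bass--Serre argument directly on $T_{H'}$ by collapsing all edges outside a single $H'$-orbit, obtaining the Bass--Serre tree of a free splitting of $H'$ in which $H$ is elliptic whenever $S_H$ misses that orbit, so $H$ lies in a conjugate of a free factor (and conjugates of free factors are free factors); or (ii) as the paper does, note that if the induced $\pi_1$-injective map of quotient graphs is not surjective, its image retracts into a proper subgraph whose fundamental group is a free factor of $H'$. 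In either version you must also justify that the factor obtained is \emph{proper}, which requires normalizing $T_{H'}/H'$ to be a core graph (no leaves or hanging trees); this normalization also matters for your claim that the quotient has boundedly many edges of rank $r$. Two smaller points: you need $S_H$ to contain a \emph{full} edge from every orbit, not merely to meet every orbit, to get coarse surjectivity of $\bar f$ with uniform constants (this does hold, since the minimal subtree is a union of axes); and in the last step it suffices to observe that $H'$ is generated by elements of $B_G(R)$, so you need not produce an honest basis of bounded length.
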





\cref{thm: ascending quasiconvex stabilizes} easily follows from \cref{thm: main_plus} as we explain in \S\ref{sec: ascending chains}.

\paragraph*{Acknowledgements.}
We would like to thank Edgar Bering for many fruitful conversations. 

\section{Outline}\label{sec: outline}
In this section we review Kapovich and Myasnikov's  proof \cite{kapovich2002stallings} of the following theorem, and see how aspects of it might carry over to the more general case of hyperbolic groups.
\begin{theorem}[Takahasi \cite{takahasi1951note}, Higman \cite{higman1951finitely}]
    The free group $F_n$ satisfies ACCF.
\end{theorem}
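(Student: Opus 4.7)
The plan is to show that, under the hypothesis, the quotient core graph $\Gamma_{H'}:=H'\backslash\operatorname{Core}(T_{H'})$ has size bounded in terms of $H$, $r$, and $G$ alone. From such a bound the theorem follows at once: bounded core size forces $H'$ to be generated by elements of bounded $G$-length (using the uniform arboreal constants supplied by \cref{thm: quasiconvex is arboreal}), so only finitely many subgroups $H'$ can arise by a pigeonhole argument in the Cayley graph.

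To fit the candidates into a uniform geometric framework, apply \cref{thm: quasiconvex is arboreal} to obtain constants $K\ge 1$, $C\ge 0$, depending only on $G$ and $r$, such that every quasiconvex free subgroup of $G$ of rank $r$ admits a $(K,C)$-arboreal structure $\phi_{H'}:T_{H'}\to G$. The fixed subgroup $H$ is $(K_0,C_0)$-arboreal via some $\phi_H:T_H\to G$. For any admissible $H'\ge H$, the $H$-orbit in $G$ lies coarsely in $\phi_{H'}(T_{H'})$ by equivariance, so composing $\phi_H$ with a coarse quasi-inverse of $\phi_{H'}$ produces an $H$-equivariant $(K',C')$-quasi-isometric map $f:T_H\to T_{H'}$, whose constants depend only on $K,C,K_0,C_0$, and the hyperbolicity constant $\delta$ of $G$. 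Restricting to cores and passing to $H$-quotients yields a uniformly quasi-isometric map between finite graphs $\bar f:\Gamma_H\to H\backslash\operatorname{Core}_H(T_{H'})$, whose target naturally maps to $\Gamma_{H'}$.

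The main obstacle is turning ``$H$ is not contained in a free factor of $H'$'' into a size bound for $\Gamma_{H'}$. The mechanism I anticipate uses the classical Stallings-folding criterion inside the free group $H'$: a finitely generated subgroup $H\le H'$ is a free factor of $H'$ exactly when its folded Stallings graph embeds into $\Gamma_{H'}$ in a ``retract-like'' way. If $\Gamma_{H'}$ had substantially more edges than a $(K',C')$-neighbourhood of the image of $\bar f$, one should be able to extract a proper subgraph $\Gamma^{*}\subsetneq\Gamma_{H'}$ exhibiting $H$ (or a conjugate) inside a proper free factor of $H'$, contradicting the hypothesis. Making ``substantially more'' quantitative, in terms of the uniform QI constants $K',C'$ and the rank $r$, is the hard part; once achieved, a bound on the number of edges combined with the rank-$r$ constraint (Euler characteristic $1-r$ of the reduced core) controls the number of vertices as well, yielding the required bound on $|\Gamma_{H'}|$ and completing the proof.
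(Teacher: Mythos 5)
Your proposal is really an outline of \cref{thm: main_plus} rather than a proof of the free-group statement, and its central step is missing. The setup (uniform constants from \cref{thm: quasiconvex is arboreal}, an $H$-equivariant quasi-isometric map $T_H\to T_{H'}$ obtained by composing with a coarse quasi-inverse, descending to $\bar f:\Gamma_H\to\Gamma_{H'}$) matches the paper's \cref{lem: map between cores}. But the step you yourself flag as ``the hard part'' --- converting ``$H$ is not contained in a proper free factor of $H'$'' into a bound on the size of $\Gamma_{H'}$ --- is precisely the content you need and do not supply, and the mechanism you anticipate (extract a proper subgraph whenever $\Gamma_{H'}$ has ``substantially more'' edges than a neighbourhood of the image of $\bar f$) is not the right one. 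No quantitative threshold is needed: the minimal subgraph of $\Gamma_{H'}$ containing the image of the continuous map $\bar f$ is connected, contains the basepoint, and its fundamental group is a free factor of $H'$ containing $H$; the hypothesis therefore forces $\bar f$ to be \emph{onto}, and then $\sigma(\Gamma_{H'})$ is bounded linearly in $\sigma(\Gamma_H)$ simply because the $\bar f$-images of the edges of $\Gamma_H$ cover $\Gamma_{H'}$. You also silently assume the reduction of ACCF to the no-proper-free-factor situation (the paper's \cref{lem: ascneding not in free factors} and \cref{lem: first not free factor}); without it the ``hypothesis'' in your first sentence has no referent.

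For the statement actually being proved --- ACCF for $F_n$ --- this machinery is in any case unnecessary, and the paper's argument is far more elementary: in the Cayley tree one takes $T_i=\mathrm{Hull}(H_i\cdot 1)$, observes that $T_i\subseteq T_{i+1}$ on the nose (since $T_{i+1}$ is an $H_i$-invariant subtree containing $1$), and obtains honest simplicial surjections $\varphi_i:\Gamma_i\to\Gamma_{i+1}$, surjective for exactly the free-factor reason above. The number of edges is then non-increasing, so the chain of graphs, and hence of subgroups, stabilizes. Your quasi-isometry constants, Morse-lemma neighbourhoods, and pigeonhole count in the Cayley graph are only needed in the genuinely hyperbolic setting of \cref{thm: main_plus}, where the literal inclusion $T_i\subseteq T_{i+1}$ is unavailable.
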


\begin{proof}[Proof \cite{kapovich2002stallings}]
Let $|S|=k$, and $G=F(S)$ be the free group on $S$. Let $H_1\le H_2\le \dots \le G$ be an ascending chain of (free) subgroups of rank $k$. As we will see in \cref{lem: ascneding not in free factors} we may assume that $H_i$ is not contained in a free factor of $H_{i+1}$ for all $i$. Now, consider the action of one of the groups $H_i$ on $\Cay(G,S)$. For each one of the groups $H_i$, let $T_i$ be the convex hull of the orbit of $1\in\Cay(G,S)$. $T_i$ is an $H_i$-invariant subtree and the quotient $\Gamma_i=\rquot{H_i}{T_i}$ is a finite metric graph with $\pi_1(\Gamma_i)\simeq H_i$. The inclusion $T_i\subseteq T_{i+1}$ (which is due to the fact that $T_{i+1}$ is also $H_i$-invariant) induces a map $\varphi_i:\Gamma_i\to\Gamma_{i+1}$. These maps are surjective - otherwise, $H_i$ would have been contained in a free factor of $H_{i+1}$. Since the maps $\varphi_i$ are simplicial, the number of edges in $\Gamma_i$ must reduce. This means that the chain of graphs $\Gamma_i$ stabilizes. By the definition of $T_i$, the groups $H_i$ must eventually stabilize as well.
\end{proof}
 
The key part of the proof is finding a cocompact invariant subtree, and using it to attach a finite graph to every subgroup in the chain. In hyperbolic groups, it is not clear how to do this - the group does not have a unique minimal invariant subtree. To remedy this, we define the notion of arboreality (\cref{def: arboreal}). Arboreal subgroups are assumed to come with a finite graph attached, and a quasi-isometry sending the universal cover to the Cayley graph of our group. In sections \S\ref{sec: folding}-\S\ref{sec: uniform constants} we will show that indeed every quasiconvex free subgroup is arboreal with constants that are determined by the rank. After that, in \S\ref{sec: maps between cores}-\S\ref{sec: ascending chains} we look at nested subgroups that are arboreal with the same constants. we can work with chains of subgroups that are all arboreal with the same constants.

In \S\ref{sec: folding} we define a notion of folding (inspired by Stallings' foldings) so that we are able to improve cores that do not meet our criteria. 

In \S\ref{sec: expanding quasi isometry} we take a core that decomposes into two good subcores, and prove that if this decomposition is optimal in the sense that there are no foldings, the big core is also good enough. We use local to global - the fact that no folding is possible gives us all the local data we need.

In \S\ref{sec: uniform constants} we put the pieces together, to prove \cref{thm: quasiconvex is arboreal}. 

Once we finish with \cref{thm: quasiconvex is arboreal}, we can follow the proof above and generalize it to hyperbolic groups. 

The proof starts with the reduction that lets us assume $H_i$ is not contained in a proper free factor of $H_{i+1}$. As seen in \cref{lem: ascneding not in free factors}, this reduction is possible even in the case of hyperbolic groups. In \cref{lem: first not free factor}, we go a step further and show that $H_1$ is not contained in a proper free factor of $H_{i}$.

Arboreality attaches a graph to a subgroup. But given an inclusion $H_1<H_2$, how is the graph attached to $H_1$ related to the graph attached to $H_2$? In the original proof, there is an inclusion of the invariant subtrees which induces a map between the quotients. In the hyperbolic case, we do not have this inclusion. In \S\ref{sec: maps between cores} we use properties of hyperbolic spaces to build maps between cores. We cannot guarantee these maps are simplicial, but they are quasi isometries.

After constructing maps between cores (that are surjective and $\pi_1$-injective), the original proof uses the fact that these are simplicial maps between finite graphs to conclude the cores stabilize. For hyperbolic groups, we cannot do this. Instead, we use the fact that the maps are quasi-isometries, and that Cayley graph are locally finite. In \S\ref{sec: bounding cores}, we prove \cref{thm: main_plus}. Then, in \S\ref{sec: ascending chains} we finish everything by proving \cref{thm: ascending quasiconvex stabilizes}

\section{Preliminaries}\label{sec: prelims}
\begin{notation}
    Throughout the article (with \S\ref{sec: ascending chains} as the exception) we work with a hyperbolic group $G$. We also fix a Cayley graph which we denote by $X$. $X$ is assumed to be $\delta$-hyperbolic. The metric is denoted by $d_X$, and the $K$-neighborhood of a subset $A\subset X$ is denoted by $\calN_K(A)$. We denote the action of $G$ on $X$ by multiplication - $g\in G$ maps $x\in X$ to $g\cdot x$. For $p,q\in X$, we denote a geodesic between $p$ and $q$ by $[p,q]_X$. There is some ambiguity since geodesics are not necessarily unique, but it should be clear from the context whether we take a specific geodesic or an arbitrary one. Finally, given a graph $\Gamma$ we denote its universal cover by $\tild\Gamma$, and the covering map by $\pi_\Gamma$. 
\end{notation}
\begin{definition}\label{def: core}
    Suppose $H\leq G$ is a free quasiconvex subgroup. An \emph{$H$-core} is a triplet $\frC=(\Gamma, \rho, \iota)$ such that:
    \begin{enumerate}
        \item $\Gamma$ is a finite metric graph
        \item $\rho:H\inj\Aut(\tild\Gamma)$ 
        is a free isometric action 
        with $\rquot{H}{\tild\Gamma}=\Gamma$ 
        \item $\iota:\tild\Gamma\looparrowright X$ is an $H$-equivariant immersion that maps edges of $\tild\Gamma$ to geodesics of $X$ isometrically.
    \end{enumerate}
    A core $\frC$ is called \emph{$(K,C)$-good} if $\iota$ is a $(K,C)$-quasi isometric embedding.
    A \emph{based $H$-core} is a quadruplet $\frC_*=(\Gamma,\tild*,\rho,\iota)$ such that $(\Gamma,\rho,\iota)$ is an $H$-core and $\tild*\in\tild\Gamma$ is a vertex such that $\iota(\tild*)=1\in X$. We denote $*=\pi_\Gamma(\tild*)\in\Gamma$.
    The \emph{size of a core} $\sigma(\frC)=\sigma(\Gamma)$ is the sum of the lengths of edges in $\Gamma$.
\end{definition}
\begin{remark}
    In a based $H$-core $\frC_*=(\Gamma,\tild*,\rho,\iota)$, we get that $\rho$ is an isomorphism between $\pi_1(\Gamma,*)$ and $H$.
\end{remark}
\begin{remark}
  Note that $H$ is $(K,C)$-arboreal if and only if it admits a $(K,C)$-good based core.  
\end{remark}



\section{Folding cores}\label{sec: folding}
We move towards finding a proof of \cref{thm: quasiconvex is arboreal}. We do this using foldings. 
\begin{definition}\label{def: simple folding}
    Let $\Gamma$ be a connected metric graph.
    A \emph{simple folding of $\Gamma$ along an edge $e$} is a connected metric graph $\Gamma'$ and a homotopy equivalence $F:\Gamma \to \Gamma'$ which is one of the following:
    \begin{enumerate}
        \item Replacing an edge: $\Gamma' = (\Gamma - e) \cup e'$  (so that $\Gamma - e \cup e'$ is connected),
        \item Identifying a pair of vertices: $\Gamma'$ is obtained from $\Gamma - e$ by identifying a pair of vertices, or
        \item Adding a balloon: if $e$ is non-separating, $\Gamma'$ is obtained from $\Gamma - e$ by attaching a balloon, i.e. an edge with a loop at its end.
    \end{enumerate}
    In all cases $F$ satisfies $F|_{\Gamma -e}$ is the obvious inclusion (or quotient) map, and $e$ is mapped linearly to a new path between the same endpoints. 
\end{definition}
A simple folding is a way of modifying the graph $\Gamma$ by homotopy that changes only the edge $e$ (and glues it in a new way). We want to expand this notion to cores. A core comes with a metric and a map from the universal cover to $X$. We want the folding to relate to this extra data in the same way as before - not changing it on $\Gamma-e$, but allowing some change on $e$.
\begin{definition}\label{def: improvement}
    Suppose $\frC=(\Gamma,\rho, \iota)$ and $\frC'=(\Gamma',\rho',\iota')$ are $H$-cores, and $F:\Gamma\to\Gamma'$ is a simple folding along an edge $e\in\Gamma$. $\frC'$ is an \emph{improvement of $\Gamma$ along $e$}, if:
    \begin{enumerate}
        \item $F|_{\Gamma-e}$ is length-preserving.
        \item For a suitable lift $\tild F:\tild\Gamma\to\tild\Gamma'$ of $F$, and $x\in\tild\Gamma-\pi_\Gamma^{-1}(e)$ we have $\iota(x)=\iota'(\tild F(x))$.
        \item $\sigma(\frC')<\sigma(\frC)$
    \end{enumerate}

    The edge $e$ is \emph{minimal} if there are no improvements of $\frC$ at $e$.
    An edge $\tild e$ of $\tild\Gamma$ is \emph{minimal} if $\pi(\tild e)$ is minimal.
\end{definition} 
The space $\tild\Gamma-\pi_\Gamma^{-1}(e)$ is a union of trees that is mapped into $X$. the lifts of $\tild{e}$ are connecting the components of $\tild\Gamma-\pi_\Gamma^{-1}(e)$. To find an improvement at $e$, we can look inside $X$ and try to find other ways to connect the components of the image $\iota(\tild\Gamma-\pi_\Gamma^{-1}(e))$. Such a replacement should reduce the length of the quotient.

\section{Expanding Quasi Isometry}\label{sec: expanding quasi isometry}

Throughout this section, we work with the following: \begin{setup}\label{setup}
    $H$ is a quasiconvex free subgroup, $\frC=(\Gamma,\rho,\iota)$ is a core for $H$, and $e\in \Gamma$ is a minimal edge.
    Let $\tild e \in \pi\ii(e)$.
    Let $\tild v_1,\tild v_2$ be the two endpoints of $\tild e$, and let $\tild \Gamma_i$ be the component of $\tild \Gamma - \pi\ii(e)$ which contains $\tild v_i$.
    Let $H_i = \Stab_H(\tild \Gamma_i)$ and $\Gamma_i =  \rquot{H_i}{\tild\Gamma_i} \subset \Gamma$.
    Denote by $\frC_i$ the $H_i$-core given by $\frC_i=(\Gamma_i,\rho_i,\iota_i)$ where $\rho_i$ is the action of $H_i$ on $\tild \Gamma_i$ and $\iota_i = \iota|_{\tild \Gamma_i}$.
\end{setup}
\begin{remark}
        If $e$ separates then $\Gamma_1\ne \Gamma_2$ and $H=H_1 * H_2$.
        Otherwise, if $e$ is non-separating, $\Gamma_1 = \Gamma_2$, the subgroups $H_1,H_2$ are conjugate subgroups of $H$ and $H=H_1 * \bbZ = H_2 * \bbZ$.
    \end{remark}

The goal of this section is to prove the following:

\begin{proposition}\label{prop: constants exist}
    For all $K,C$ there exist $K',C', L$ such that if in \cref{setup}, $\ell(\tild e)\geq L$ and 
    $\frC_1,\frC_2$ are $(K,C)$-good, then $\frC$ is $(K',C')$-good.
\end{proposition}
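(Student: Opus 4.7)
The plan is to apply the standard local-to-global principle for quasi-geodesics in $\delta$-hyperbolic spaces: for every pair $(K,C)$ there exist constants $L_0, K', C'$, depending only on $\delta, K, C$, such that every $L_0$-local $(K,C)$-quasi-geodesic in $X$ is a global $(K',C')$-quasi-geodesic. The goal is therefore to show that, for $L$ chosen sufficiently large, $\iota$ sends every geodesic of $\tild\Gamma$ to an $L_0$-local quasi-geodesic in $X$.

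Given $p,q \in \tild\Gamma$, let $\gamma$ be the geodesic between them and decompose it as an alternating concatenation $\gamma = \alpha_0 \cdot \tild e_1 \cdot \alpha_1 \cdots \tild e_n \cdot \alpha_n$, where each $\alpha_i$ is a (possibly trivial) geodesic arc inside a single component of $\tild\Gamma - \pi_\Gamma\ii(e)$ and each $\tild e_j$ is an $H$-translate of $\tild e$. By $H$-equivariance and goodness of $\frC_1, \frC_2$, each $\iota(\alpha_i)$ is a $(K,C)$-quasi-geodesic in $X$, and each $\iota(\tild e_j)$ is a genuine geodesic of length at least $L$.

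The main step, and the main obstacle, is a uniform local claim at junctions: there should exist constants $M, K_0, C_0$, depending only on $\delta, K, C$, such that any sub-arc of $\iota(\gamma)$ of length at most $M$ that crosses a single junction $\iota(\tild v)$ is a $(K_0, C_0)$-quasi-geodesic. I would prove this by contrapositive. If a length $\sim M$ initial sub-arc of $\iota(\tild e_j)$ and a length $\sim M$ terminal sub-arc of $\iota(\alpha_i)$, both emanating from $\iota(\tild v)$, fellow-traveled, then by thin-triangle arguments in $X$ some vertex $\iota(\tild u)$ of the tree $\iota(\tild\Gamma_i)$ near $\tild v$ would lie within $O(\delta)$ of an interior point of $\iota(\tild e_j)$, and hence would be joined to the far endpoint $\iota(\tild v')$ of $\tild e_j$ by a path strictly shorter than $\ell(\tild e)$. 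Descending to $\Gamma$, this configuration yields an improvement of $\frC$ at $e$: either by identifying $v'$ with $u$ (type 2), by replacing $e$ with a shorter edge attached at a different endpoint (type 1), or, when $e$ is non-separating and the backtracking closes up a loop, by opening $e$ into a balloon (type 3). This contradicts minimality of $e$. The delicate part is the case analysis: one must distinguish whether the fellow-traveling sits on the $\alpha$-side or the $\tild e$-side of the junction, whether $e$ separates, and which vertices coincide after passing to the quotient; but in every configuration minimality rules out the backtracking.

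Once the local claim is in place, choose $L$ much larger than $L_0 + M$. Then every $L_0$-subsegment of $\iota(\gamma)$ falls into one of three cases: entirely inside some $\iota(\alpha_i)$, where it is a $(K,C)$-quasi-geodesic by hypothesis; entirely inside some $\iota(\tild e_j)$, where it is a genuine geodesic; or crossing exactly one junction, where the local claim applies. In all three cases it is a $(K_1, C_1)$-quasi-geodesic for uniform constants $K_1, C_1$. The local-to-global theorem then upgrades $\iota(\gamma)$ to a global $(K', C')$-quasi-geodesic. Combined with the automatic Lipschitz upper bound $d_X(\iota(p), \iota(q)) \le d_{\tild\Gamma}(p, q)$, which holds because $\iota$ restricts to an isometry onto a geodesic on each edge, this proves $\frC$ is $(K', C')$-good with $K', C'$ depending only on $\delta, K, C$.
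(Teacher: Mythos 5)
Your overall architecture --- use minimality of $e$ to control backtracking at the junctions, then invoke the local-to-global theorem for quasi-geodesics --- is exactly the paper's, and your treatment of a single junction (fellow-traveling would put a vertex of $\iota(\tild\Gamma_i)$ too close to the interior of $\iota(\tild e)$, contradicting the fact that minimality forces $\iota(\tild e)$ to be a shortest path between the two components) matches the paper's Lemmas on the Gromov product bound. The gap is in your case analysis of $L_0$-subsegments of $\iota(\gamma)$. You list three cases (inside some $\alpha_i$, inside some $\tild e_j$, crossing exactly one junction), but the arcs $\alpha_i$ may be arbitrarily short or trivial, so a subsegment of length $L_0 \ll \ell(\tild e)$ can contain a terminal piece of $\tild e_j$, all of a short $\alpha_j$, and an initial piece of $\tild e_{j+1}$: it crosses two junctions and meets two distinct lifts of $e$ attached to the same component. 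This is precisely the configuration to which the paper devotes its longest argument (showing $\iota|_{\tild e\cup\tild e'\cup\tild\Gamma_i}$ is a uniform quasi-isometric embedding), and it is not subsumed by your three cases.

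Moreover, your proposed mechanism for that situation --- ``in every configuration minimality rules out the backtracking'' --- fails in one of the subcases. Writing $h$ for the element of $H$ carrying $\tild e$ to $\tild e'$: if $h$ does not stabilize the common component, the backtracking does yield an improvement (the balloon move), as you suggest. But if $h$ stabilizes the component, i.e.\ $h\in H_1$, no improvement of the core exists; one cannot open $e$ into a balloon representing $h$ without changing the group. The paper instead argues directly: if the Gromov product $D$ at the junction were large, then $d_X(p,h^m\cdot p)\le 4\delta m$ for all $m$, while the $(K,C)$-goodness of $\frC_1$ together with the fact that $h$ has translation length at least $1$ on $\tild\Gamma$ forces $d_X(\iota(v),h^m\cdot\iota(v))$ to grow linearly in $m$; applying the already-settled ``far apart'' case to a suitable power $h^m$ then bounds $D$ in terms of $K,C,\delta$ only. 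You would need to add both the missing two-junction case and this non-improvement argument for the proof to go through.
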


\begin{lemma}\label{obs: minimal edge implies shortest path}
    In \cref{setup}, the image $\iota(\tild e)$ is a shortest possible path in $X$ between $\iota_1(\tild\Gamma_1)$ and $\iota_2(\tild\Gamma_2)$. 
\end{lemma}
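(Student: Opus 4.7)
The plan is to argue by contradiction: assume that $d := d_X(\iota_1(\tilde\Gamma_1),\iota_2(\tilde\Gamma_2)) < \ell(\tilde e)$, and construct an explicit improvement of $\frC$ at $e$, contradicting the minimality of $e$. The whole point of Definition~\ref{def: improvement} is that it allows us to exchange the edge $e$ for a ``better" gluing whenever one exists in $X$; the lemma is the direct contrapositive instance of that.

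To build the improvement, I would first choose realizing points: pick $x_1 \in \tilde\Gamma_1$ and $x_2 \in \tilde\Gamma_2$ with $d_X(\iota(x_1),\iota(x_2)) = d < \ell(\tilde e)$. If either $x_i$ lies in the interior of an edge, I equivariantly subdivide $\tilde\Gamma$ along the full $H$-orbits of $x_1,x_2$; this has no effect on the underlying metric graph, on $\iota$, or on the size $\sigma(\frC)$, and it turns $x_1,x_2$ into vertices of $\tilde\Gamma$ projecting to vertices $u_1,u_2$ of $\Gamma$. Now perform the type-(1) simple folding at $e$: let $\Gamma' = (\Gamma-e) \cup e'$, where $e'$ is a new edge from $u_1$ to $u_2$ of length $d$. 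Connectedness is immediate: in the separating case, $u_i$ lies in $\Gamma_i$, so $e'$ reconnects the two components of $\Gamma - e$; in the non-separating case, $\Gamma - e$ is already connected. Define $\iota'$ to agree with $\iota$ on $\tilde\Gamma - \pi^{-1}(e)$ and map a chosen lift $\tilde e'$ isometrically to a geodesic from $\iota(x_1)$ to $\iota(x_2)$ in $X$; propagate to the $H$-orbit of $\tilde e'$ by equivariance. The resulting triple $\frC'$ satisfies the length-preserving, compatibility, and strict-size-decrease conditions of Definition~\ref{def: improvement}, since $\sigma(\frC') - \sigma(\frC) = d - \ell(\tilde e) < 0$.

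What remains is checking that $\frC'$ is a bona fide $H$-core in the sense of Definition~\ref{def: core}, namely that $\iota'$ is still an immersion. This is the main technical concern: the geodesic in $X$ from $\iota(x_1)$ to $\iota(x_2)$ could a priori share its initial (or terminal) segment with an edge of $\iota(\tilde\Gamma_i)$ incident to $\iota(x_i)$, and thus violate local injectivity at $u_1$ or $u_2$. I would handle this by observing that any such overlap can be removed by further simple foldings of types (2) or (3) at the offending endpoints (identifying the overlapping vertices, or absorbing a cancelling segment into a balloon), each of which strictly decreases size. The composition of our replacement at $e$ with any such cleanups still strictly reduces $\sigma$ relative to $\frC$, and still changes nothing on $\tilde\Gamma - \pi^{-1}(e)$ past the initial cleanup region; so it qualifies as an improvement at $e$. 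This contradicts minimality and proves that $\iota(\tilde e)$ already realizes the distance between $\iota_1(\tilde\Gamma_1)$ and $\iota_2(\tilde\Gamma_2)$.
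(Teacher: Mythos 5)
Your construction is essentially the paper's argument: replace $e$ by a new edge mapped to a shortest path between $\iota_1(\tild\Gamma_1)$ and $\iota_2(\tild\Gamma_2)$, and observe that this strictly decreases $\sigma$. Two points need attention, though.

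First, you do not handle the degenerate case $d=0$, i.e.\ when $\iota_1(\tild\Gamma_1)$ and $\iota_2(\tild\Gamma_2)$ already intersect. There your type-(1) replacement produces an ``edge'' of length zero, which is not a legitimate edge of a metric graph and cannot be mapped isometrically to a geodesic. The paper treats this as a separate case using the type-(2) folding of \cref{def: simple folding} (delete $e$ and identify the two vertices $u_1,u_2$ with the same image), and in that case one must also check $u_1\neq u_2$ in $\Gamma$, which the paper does by freeness of the $G$-action on $X$. This is a genuine, if easily repaired, omission.

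Second, your ``cleanup'' for the immersion condition does not work as stated: an improvement at $e$ in the sense of \cref{def: improvement} must come from a \emph{single} simple folding along $e$ and must satisfy $\iota(x)=\iota'(\tild F(x))$ for every $x\in\tild\Gamma-\pi_\Gamma^{-1}(e)$. A composition with further type-(2) or type-(3) folds at the endpoints would modify the graph and the map on edges other than $e$, so the result is no longer an improvement at $e$ and cannot be used to contradict minimality of $e$. Fortunately the concern it is meant to address is vacuous: since $\beta$ realizes the distance between the two images, its germ at $\iota(\tild u_1)$ cannot coincide with the germ of the image of an edge of $\tild\Gamma_1$ at $\tild u_1$ --- otherwise a proper terminal subpath of $\beta$ would be a strictly shorter path between $\iota_1(\tild\Gamma_1)$ and $\iota_2(\tild\Gamma_2)$ --- and local injectivity elsewhere is inherited from $\frC$. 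Replacing the cleanup paragraph by this one-line observation, and adding the $d=0$ case, brings your argument in line with the paper's proof.
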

\begin{proof}
    Suppose for contradiction that $\iota(\tild e)$ is not a shortest path between $\iota_1(\tild\Gamma_1)$ and $\iota_1(\tild\Gamma_2)$. We will contradict the minimality of $\tild e$ by constructing an improvement of $\Gamma$ at $e=\pi_\Gamma(\tild e)$. We will construct the improvement $\Gamma'$ in two ways, depending on two cases.
    
    \textbf{Case 1.} Suppose that $\iota_1(\tild\Gamma_1)$ and $\iota_2(\tild\Gamma_2)$ intersect at a vertex $g$. Then there exist two vertices $\tild{u_1}\in\tild\Gamma_1,\tild{u_2}\in\tild\Gamma_2$ such that $\iota_1(\tild{u_1})=\iota_2(\tild{u_2})=g$. 
    Let $u_i = \pi(\tild u_i)$. We claim that $u_1\ne u_2$: as otherwise the path $\tild \eta$ connecting $\tild u_1,\tild u_2$ descends to a non-trivial loop $\eta = \pi\circ \tild \eta\in \pi_1(\Gamma)\simeq H$ which satisfies $\eta.\tild u_1= \tild u_2$ and so by $H$-equivariance $\eta\cdot g=g$; however the action on $X$ is free.
    
    Let $\gamma_i$ be the path from $\tild v_i$ to $\tild u_i$ in $\tild\Gamma_i$, and let $\gamma_i = \pi \circ \tild \gamma_i$.
    Let $\Gamma'$ be the graph obtained from $\Gamma$ by removing $e$ and identifying $u_1$ with $u_2$, i.e. $ \Gamma ' = (\Gamma - e)/u_1 \sim u_2$. Let $F:\Gamma\to\Gamma'$ be the folding map which is the quotient map on $\Gamma - e$ and sends $e$ to the concatenation $\gamma_1 \bar \gamma_2$ (where $\bar \gamma_2$ is the reverse path of $\gamma_2$), and let $\tild F$ be a lift of $F$ to the universal cover. The  map $\iota$ on $\tild \Gamma - \pi\ii(e)$ defines a map $\iota':\tild{\Gamma'}\to X$.
    Since $F$ is a homotopy equivalence and $H$ is the deck transformation group of $\tild\Gamma$, we get an action $\rho'$ of $H$ on $\tild\Gamma'$. By the definition of $\iota'$, we have $\iota'(h.x)=h.\iota'(x)$ for $x\in\tild\Gamma',h\in H$. Therefore, $(\Gamma', \varphi',\iota')$ is an $H$-core.  
    Now that we know that $(\Gamma',\rho',\iota')$ is an $H$-core, we can see that it is an improvement of $\Gamma$ at $e$: In \cref{def: improvement}, properties 1,2 follow from the definition of $\iota'$, 3 holds by the definition of $\rho'$, and 4 follows from the definition of $\Gamma'$. Therefore, $e$ is not minimal.

    \textbf{Case 2.} Suppose that $\iota_1(\tild\Gamma_1)$ and $\iota_2(\tild\Gamma_2)$ do not intersect. Let $\beta$ be a shortest path in $X$ between $\iota_1(\tild\Gamma_1)$ and $\iota_2(\tild\Gamma_2)$. Let $\tild u_1\in\Gamma_1,\tild u_2\in\Gamma_2$ such that $\iota(\tild u_1)$ and $\iota(\tild u_2)$ are the endpoints of $\beta$. We denote $u_1=\pi_\Gamma(\tild u_1),u_2=\pi_\Gamma(\tild u_2)$. Let $\tild\gamma_i$ be the path from $\tild v_i$ to $\tild u_i$ in $\tild\Gamma_i$ and let $\gamma_i=\pi_\Gamma\circ\tild\gamma_i$. Let $\Gamma'=(\Gamma-e)\cup e'$, where $\ell(e')=\ell(\beta)$ and the endpoints of $e'$ are $v_1$ and $v_2$. Let $F$ be the folding map which is the identity map on $\Gamma-e$ and sends $e$ to the concatination $\gamma_1e'\bar\gamma_2$. We can define the map $\iota':\tild{\Gamma'}\to X$ in the following way: On $\tild\Gamma'-\pi_{\Gamma'}^{-1}(e')$, we use $\iota$, and lifts of $e'$ are mapped isometrically to the appropriate copies of $\beta$. Since $F$ is a homotopy equivalence we get an action $\rho'$ of $H$ on $\tild\Gamma'$. We now have a triplet $(\Gamma',\rho',\iota')$ which is an $H$-core, and again, it is an improvement of $\Gamma$ at $e$. We conclude that $e$ is not minimal.
\end{proof}


\begin{lemma}\label{lem: upperbound on gromov prod}
    For all $K,C$, there exists $M_1$, such that if in \cref{setup}, $\frC_i$ is $(K,C)$-good  $x\in \tild e, y\in \tild \Gamma_i$ then $(\iota(x)|\iota(y))_{\iota(\tild v_i)}<M_i$.
\end{lemma}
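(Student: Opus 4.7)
The plan is to argue by contradiction, combining $\delta$-hyperbolicity of $X$ with the minimality property of $\iota(\tild e)$ already proved in \cref{obs: minimal edge implies shortest path}. If the Gromov product $R := (\iota(x)|\iota(y))_{\iota(\tild v_i)}$ were very large, I want to produce a path between $\iota_1(\tild\Gamma_1)$ and $\iota_2(\tild\Gamma_2)$ that is strictly shorter than $\iota(\tild e)$, contradicting the fact that $\iota(\tild e)$ realizes the distance between these two sets.

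More concretely, since $R\le\min\{d_X(\iota(\tild v_i),\iota(x)),\,d_X(\iota(\tild v_i),\iota(y))\}$, the standard $\delta$-thin-triangles estimate yields points $p\in [\iota(\tild v_i),\iota(x)]$ and $q\in [\iota(\tild v_i),\iota(y)]$ with $d_X(\iota(\tild v_i),p)=d_X(\iota(\tild v_i),q)=R$ and $d_X(p,q)\le 4\delta$. Because $\iota$ maps the edge $\tild e$ isometrically onto a geodesic of $X$, I may take the geodesic $[\iota(\tild v_i),\iota(x)]$ to be the initial subsegment of $\iota(\tild e)$; hence $p\in \iota(\tild e)$ with $d_X(\iota(\tild v_i),p)=R$ measured along $\iota(\tild e)$. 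On the other hand, since $\frC_i$ is $(K,C)$-good, the image under $\iota_i$ of the (unique) tree-geodesic path in $\tild\Gamma_i$ from $\tild v_i$ to $y$ is a $(K,C)$-quasi-geodesic in $X$ from $\iota(\tild v_i)$ to $\iota(y)$. By the Morse lemma in a $\delta$-hyperbolic space, there is a constant $D=D(K,C,\delta)$ such that the geodesic $[\iota(\tild v_i),\iota(y)]$ lies in the $D$-neighborhood of $\iota(\tild\Gamma_i)$; in particular $q$, and therefore $p$, is within $D+4\delta$ of some point $p'\in \iota(\tild\Gamma_i)$.

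Now concatenate the short geodesic segment from $p'$ to $p$ (of length $\le D+4\delta$) with the subsegment of $\iota(\tild e)$ from $p$ to $\iota(\tild v_{3-i})$ (of length $\ell(\tild e)-R$). This produces a path in $X$ from $\iota_i(\tild\Gamma_i)$ to $\iota_{3-i}(\tild\Gamma_{3-i})$ of total length at most $(D+4\delta)+(\ell(\tild e)-R)$. By \cref{obs: minimal edge implies shortest path}, this length is at least $\ell(\tild e)$, which forces $R\le D+4\delta$. Setting $M_1:=D(K,C,\delta)+4\delta+1$ gives the desired uniform bound depending only on $K$, $C$ and the hyperbolicity constant of $X$.

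There is no genuinely hard step here; the argument is a clean packaging of the standard fellow-travelling consequence of hyperbolicity, the Morse lemma applied to the $(K,C)$-good core $\frC_i$, and the just-established shortest-path property of $\iota(\tild e)$. The only point that demands a touch of care is matching the chosen geodesic $[\iota(\tild v_i),\iota(x)]$ with the image of $\tild e$, which is legitimate because the definition of a core stipulates that $\iota$ carries edges isometrically to geodesics of $X$.
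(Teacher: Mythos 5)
Your proof is correct and follows essentially the same route as the paper's: both bound the Gromov product by locating a point $p$ on $\iota(\tild e)$ at distance roughly $(\iota(x)|\iota(y))_{\iota(\tild v_i)}$ from $\iota(\tild v_i)$, using thinness of triangles plus the Morse lemma to show $p$ is uniformly close to $\iota_i(\tild\Gamma_i)$, and deriving a contradiction with the shortest-path property of \cref{obs: minimal edge implies shortest path}. The only differences are bookkeeping of the additive constants ($4\delta$ versus $2\delta$), which is immaterial.
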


\begin{proof}
    Without loss of generality let us prove it for $i=1$.
    Let $\gamma_1$
    be the geodesic between $\tild v_1$ and $y$.
    $\iota(\gamma_1)$ is a $(K,C)$-quasi geodesic. By the Morse lemma we have that $\iota(\gamma_1)$ is $M_0$-close to $[\iota(y),\iota(\tild v_1)]_X$, for $M_0=M_0(K,C)$. Set $M_1 = M_0+2\delta+1$.
    
    Assume towards contradiction that $(\iota(x)|\iota(y))_{\iota(\tild v_1)}\ge  M_1$. 
    Then the point $p$ on $\iota(\tild e)$ at distance $M_1$ from $\iota(\tild v_1)$ is at distance $M_0+2\delta$ from some point in $\iota_1(\tild \Gamma_1)$: since it is at distance $2\delta$ from some point $p'$ on $[\iota(x),\iota(y)]$ and $p'$ is at distance $M_0$ from a point on $\iota(\gamma_1)$.
    This contradicts  \cref{obs: minimal edge implies shortest path} since $M_0 + 2\delta < M_1$.
\end{proof}

\begin{lemma}\label{lem: expanding by minimal edge}
    For all $K,C$ there exist $C'$ such that the following holds: If $\frC_i$ is $(K,C)$-good, then $\iota|_{\tild e\cup \tild\Gamma_i}$ is a $(K,C')$-quasi-isometric embedding. 
\end{lemma}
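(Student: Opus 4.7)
The plan is to verify both quasi-isometry inequalities for $\iota|_{\tild e\cup\tild\Gamma_1}$ separately; the argument for $i=2$ is identical. The upper bound $d_X(\iota(x),\iota(y))\le Kd_{\tild\Gamma}(x,y)+C'$ is essentially free, since $\iota$ sends each edge isometrically to a geodesic of $X$ and is therefore $1$-Lipschitz on any path-connected subset of $\tild\Gamma$. For the lower bound I split into cases. When both $x,y\in\tild e$, $\iota$ is isometric on $\tild e$ and nothing is required; when both lie in $\tild\Gamma_1$, the $(K,C)$-goodness of $\frC_1$ gives the bound with constants $(K,C)$. The only substantive case is $x\in\tild e$, $y\in\tild\Gamma_1$.

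In that case, since $\tild\Gamma$ is a tree and $\tild e$ is attached to $\tild\Gamma_1$ precisely at the vertex $\tild v_1$, the geodesic in $\tild\Gamma$ from $x$ to $y$ factors through $\tild v_1$:
\[
d_{\tild\Gamma}(x,y)=d_{\tild e}(x,\tild v_1)+d_{\tild\Gamma_1}(\tild v_1,y).
\]
The first summand equals $d_X(\iota(x),\iota(\tild v_1))$ by isometry on $\tild e$, while the second is at most $Kd_X(\iota(\tild v_1),\iota(y))+C$ by the $(K,C)$-goodness of $\frC_1$. Summing and using $K\ge 1$ I obtain
\[
d_{\tild\Gamma}(x,y)\le K\bigl[d_X(\iota(x),\iota(\tild v_1))+d_X(\iota(\tild v_1),\iota(y))\bigr]+C.
\]

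To replace the right-hand side by $d_X(\iota(x),\iota(y))$ up to a bounded error, I invoke \cref{lem: upperbound on gromov prod} with basepoint $\tild v_1$, which yields $(\iota(x)|\iota(y))_{\iota(\tild v_1)}< M_1$, equivalently
\[
d_X(\iota(x),\iota(\tild v_1))+d_X(\iota(\tild v_1),\iota(y))\le d_X(\iota(x),\iota(y))+2M_1.
\]
Combining the two displays gives $d_{\tild\Gamma}(x,y)\le Kd_X(\iota(x),\iota(y))+2KM_1+C$, so $C':=2M_1+C$ suffices (and subsumes the two trivial cases, using $K\ge 1$). I do not anticipate a genuine obstacle here: the argument is triangle-inequality bookkeeping whose only nontrivial input is the uniform Gromov product bound from \cref{lem: upperbound on gromov prod}, which is where the minimality of $e$ was already exploited.
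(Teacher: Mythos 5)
Your proposal is correct and follows essentially the same route as the paper: both arguments reduce to the case $x\in\tild e$, $y\in\tild\Gamma_i$, split the tree-geodesic at $\tild v_i$, and use the Gromov product bound of \cref{lem: upperbound on gromov prod} to convert the sum $d_X(\iota(x),\iota(\tild v_i))+d_X(\iota(\tild v_i),\iota(y))$ into $d_X(\iota(x),\iota(y))$ up to $2M_1$. The only difference is cosmetic (you bound $d_{\tild\Gamma}$ from above rather than $d_X$ from below, and your intermediate constant should be $KC$ rather than $C$, which only shifts the final $C'$).
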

\begin{proof}
Without loss of generality let us prove it for $i=1$.
The map $\iota$ is length-preserving so it is $1$-Lipschitz. This gives one of the inequalities of the quasi-isometry, namely $d_X(\iota(x),\iota(y)) \le d_\Gamma(x,y)$.

For the other inequality, let $x,y\in \tild e\cup\tild\Gamma_1$. If both $x,y$ are in $\tild\Gamma_1$, then $d_X(\iota(x),\iota(y)) \ge K\ii d_\Gamma(x,y) -C$. Similarly, if both $x,y\in \tild e$ then $d_X(\iota(x),\iota(y))=d_\Gamma(x,y)$. So assume without loss of generality that $x\in\tild e,y\in\tild \Gamma_1$.
    

    By Lemma \cref{lem: upperbound on gromov prod}  $(\iota(x)|\iota(y))_{\iota(\tild v_1)}<M_1$ and so we get the desired lower bound as follows:
    \begin{align*}
    d_X(\iota(x),\iota(y))&=d_X(\iota(x),\iota(v)))+d_X(\iota(v),\iota(y)-2(\iota(x)|\iota(y))_{\iota(\tild v_1)}\\
    &\geq d_{\tild\Gamma}(x,v)+\frac{1}{K}d_{\tild\Gamma}(v,y)-C-2M_1\\
    &\geq \frac{1}{K}(d_{\tild\Gamma}(x,v)+d_{\tild\Gamma}(v,y))-C-2M_1\\
    &=\frac{1}{K}d_{\tild\Gamma}(x,y)-C-2M_1
    \end{align*}
    Setting $C' = C+2M_1$ completes the proof.
\end{proof}

\begin{lemma}\label{lem: expanding by two copies of minimal edge}
    For all $K,C$ there exist $C''$ such that if in \cref{setup}, $\frC_i$ is $(K,C)$-good, and $\tild e'$ is another lift of $e$ with an endpoint in $\tild \Gamma_i$ then $\iota|_{\tild e\cup\tild e'\cup \tild \Gamma_i}$ is a $(K,C'')$-quasi isometric embedding.
\end{lemma}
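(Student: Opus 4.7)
The strategy parallels \cref{lem: expanding by minimal edge}. Since $\iota$ is $1$-Lipschitz on the tree, the upper bound is immediate; for the lower bound, only the case $x\in\tild e$ and $y\in\tild e'$ (both in the interiors) requires new argument, as all other configurations reduce to \cref{lem: expanding by minimal edge} applied either to $\tild e\cup\tild\Gamma_i$ or, by $H$-equivariance (since $\tild e'=h\tild e$ for some $h\in H$ and $\iota$ is $H$-equivariant), to $\tild e'\cup\tild\Gamma_i$; both yield $(K,C')$-QI embeddings with the same constants. Denote by $\tild w_i$ the endpoint of $\tild e'$ in $\tild\Gamma_i$, and set $a=d_{\tild e}(x,\tild v_i)$, $b=d_{\tild\Gamma_i}(\tild v_i,\tild w_i)$, and $c=d_{\tild e'}(\tild w_i,y)$, so that $d_{\tild\Gamma}(x,y)=a+b+c$.

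Applying the triangle inequality at $\iota(\tild v_i)$ together with the application of \cref{lem: expanding by minimal edge} to $\tild e'\cup\tild\Gamma_i$, and using $a\geq K^{-1}a$:
\[
d_X(\iota(x),\iota(y))\;\geq\; a+K^{-1}(b+c)-C'-2(\iota(x)|\iota(y))_{\iota(\tild v_i)} \;\geq\; K^{-1}d_{\tild\Gamma}(x,y)-C'-2(\iota(x)|\iota(y))_{\iota(\tild v_i)}.
\]
The task thus reduces to bounding the Gromov product $(\iota(x)|\iota(y))_{\iota(\tild v_i)}$ by a constant $M_2=M_2(K,C,\delta)$; then $C''=C'+2M_2$ suffices.

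To establish this bound I mimic the template of \cref{lem: upperbound on gromov prod}. The tree-path $\gamma$ from $\tild v_i$ to $y$ through $\tild w_i$ lies in $\tild\Gamma_i\cup\tild e'$ and is mapped by $\iota$ to a $(K,C')$-quasi-geodesic (by the application of \cref{lem: expanding by minimal edge} above), hence is Morse-close to $[\iota(\tild v_i),\iota(y)]_X$ with constant $M_0'=M_0'(K,C')$. Setting $M_2:=M_0'+2\delta+1$ and assuming for contradiction that the Gromov product exceeds $M_2$, the point $p\in\iota(\tild e)$ at distance $M_2$ from $\iota(\tild v_i)$ must lie within $M_0'+2\delta$ of some $q\in\iota(\tild\Gamma_i)\cup\iota(\tild e')$. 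If $q\in\iota(\tild\Gamma_i)$, the minimality of $\tild e$ produces a contradiction exactly as in \cref{lem: upperbound on gromov prod}. The main obstacle is the subcase $q\in\iota(\tild e')$: here I plan to combine the quasi-geodesic parameter estimate $d_X(\iota(\tild v_i),q)\geq K^{-1}(b+s)-C'$, where $s=d_{\tild e'}(q,\tild w_i)$, with the minimality of $\tild e'$ (which holds by $H$-equivariance, as $\tild e'$ is also a lift of $e$) to derive a uniform bound on $b$. The resulting bounded-$b$ situation places $\iota(\tild v_i)$ and $\iota(\tild w_i)$ at uniformly bounded distance, from which the Gromov product bound follows by a symmetric application of the same argument at $\iota(\tild w_i)$ and transferring the conclusion back to $\iota(\tild v_i)$ using the inequality $|(\iota(x)|\iota(y))_{\iota(\tild v_i)}-(\iota(x)|\iota(y))_{\iota(\tild w_i)}|\leq d_X(\iota(\tild v_i),\iota(\tild w_i))$.
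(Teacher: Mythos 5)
Your reduction to bounding the Gromov product $(\iota(x)|\iota(y))_{\iota(\tild v_i)}$ is correct and is exactly how the paper's proof is organized (via \cref{claim: second bound on gromov prod}), and your treatment of the subcase where the nearby point $q$ lies in $\iota(\tild\Gamma_i)$ is sound --- it is essentially the paper's Case 1 together with the argument of \cref{lem: upperbound on gromov prod}. But the subcase $q\in\iota(\tild e')$ is where the real content of the lemma lives, and your proposal does not close it. After you bound $b=d_{\tild\Gamma}(\tild v_i,\tild w_i)$ (which, by the way, already follows from the quasi-geodesic estimate alone; ``minimality of $\tild e'$'' plays no role there), you propose to rerun ``the same argument'' based at $\iota(\tild w_i)$ and transfer back. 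This is circular: the symmetric argument at $\tild w_i$ has the identical problematic subcase, namely that the point of $\iota(\tild e')$ near $\iota(\tild w_i)$ is close to a point of $\iota(\tild e)$ rather than of $\iota(\tild\Gamma_i)$, and you bounce back to where you started. Concretely, the configuration you must exclude --- $\iota(\tild v_i)$ and $\iota(\tild w_i)$ at bounded distance and the two edge-images $\iota(\tild e),\iota(\tild e')$ fellow-travelling for a long time --- is perfectly compatible with \cref{obs: minimal edge implies shortest path} (both lifts can simultaneously realize the shortest distance between $\iota(\tild\Gamma_1)$ and translates of $\iota(\tild\Gamma_2)$), so no Morse-lemma or thin-triangle argument can rule it out. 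In that configuration the conclusion of the lemma is genuinely false for the given map, so it can only be excluded by invoking minimality of $e$ in the full sense of \cref{def: improvement}.

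This is what the paper does in Cases 2 and 3 of \cref{claim: second bound on gromov prod}, and it is the idea missing from your proposal: take the element $h\in H$ with $h\cdot\tild e=\tild e'$; if both Gromov products are large one produces points $p\in\iota(\tild e)$, $q\in\iota(\tild e')$ with $d_X(p,q)\le 4\delta$, deduces $d_X(p,h\cdot p)<d_X(\iota(\tild v_i'),h\cdot p)$, and from this builds an explicit improvement of $\frC$ at $e$ (replacing the non-separating edge by a balloon of strictly smaller total length), contradicting minimality. The degenerate situation $h\cdot\tild v_i=\tild w_i$ (so $h\in H_i$) needs yet another separate argument, via powers $h^m$ and the $(K,C)$-goodness of $\frC_i$, to push the configuration into the ``far apart basepoints'' case. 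Neither of these constructions appears in your outline, so as written the proof has a genuine gap at its central step.
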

\begin{proof}
    Without loss of generality let us prove it for $i=1$. 
    Let $x,y\in\tild e\cup\tild e'\cup \tild\Gamma_1$. If one of $x,y$ is in $\tild\Gamma_1$, then by \cref{lem: expanding by minimal edge} we are done (if we choose constants which are at least those in that lemma). Assume without loss of generality that $x\in \tild e, y\in\tild e'$.
    As in the proof of \cref{lem: expanding by minimal edge}, the inequality $d_X(\iota(x),\iota(y)) \le d_{\tild \Gamma}(x,y)$ is clear, so our goal is to bound $d_X(\iota(x),\iota(y))$ from below.
    
    To simplify notation let $v=\tild v_1$ be the vertex of $\tild e$ that lies in $\tild\Gamma_1$. Similarly, let $v'$ be the vertex of $\tild e'$ that lies in $\tild \Gamma_1$. 

    \begin{claim}\label{claim: second bound on gromov prod}
        There exists  a constant $M_2 = M_2(K,C)$ such that $(\iota(x)|\iota(y))_{\iota(v)}<M_2$ or $(\iota(x)|\iota(y))_{\iota(v')}<M_2$.
    \end{claim}
    
    Assuming the claim, the proof is completed as follows: Without loss of generality, assume that $(\iota(x)|\iota(y))_{\iota(v)}<M_2$. By \cref{lem: expanding by minimal edge}, there exist a constant $C'$ (that depend only on $K,C$) for which $$d_X(\iota(v),\iota(y))\ge \frac{1}{K}d_{\tild \Gamma} (v,y) - C'$$. It now follows that:
    \begin{align*}    d_X(\iota(x),\iota(y))&=d_X(\iota(x),\iota(v))+d_X(\iota(v),\iota(y))-2(\iota(x)|\iota(y))_{\iota(v)}\\
    &\geq d_{\tild\Gamma}(x,v)+\frac{1}{K}d_{\tild\Gamma}(v,y)-C'-2M_2\\
    &\geq \frac{1}{K}(d_{\tild\Gamma}(x, v)+d_{\tild\Gamma}(v,y))-C'-2M_2\\
    &=\frac{1}{K}d_{\tild\Gamma}(x,y)-C'-2M_2
    \end{align*}
\end{proof}
    \begin{proof}[Proof of \cref{claim: second bound on gromov prod}]
    There is an element $h\in H$ that sends $\tild e$ to $\tild e'$. If $h.v=v'$, then $h\in H_1=\Stab(\tild \Gamma_1)$. Otherwise, $e$ is a non-seperating edge.
    We now divide the proof to cases:
    
    \textbf{Case 1 - $d_X(\iota(v),\iota(v'))> 2M_1+\delta$.}
    We have 
    \begin{equation*}
    2M_1+\delta<d_X(\iota(v),\iota(v'))=(\iota(y)|\iota(v'))_{\iota(v)}+(\iota(y)|\iota(v))_{\iota(v')}
    \end{equation*}
    By \cref{lem: upperbound on gromov prod}, $(\iota(y)|\iota(v))_{\iota(v')}<M_1$ and so 
    \begin{equation}\label{eq: Case 1. eq1}
        (\iota(y)|\iota(v'))_{\iota(v)}>M_1+\delta.
    \end{equation}
    
    By hyperbolicity:    
    \begin{equation*}\label{eq:hyperbolicity in case 3.1}
    (\iota(x)|\iota(v'))_{\iota(v)}\ge\min\{(\iota(x)|\iota(y))_{\iota(v)},(\iota(y)|\iota(v'))_{v)}\}-\delta.
    \end{equation*}
    By \cref{lem: upperbound on gromov prod}, $(\iota(x)|\iota(v'))_{\iota(v)}<M_1$, so 
    \begin{equation}\label{eq: Case 1. eq2}
        \min\{(\iota(x)|\iota(y))_{\iota(v)},(\iota(y)|\iota(v'))_{\iota(v)}\}<M_1+\delta.
    \end{equation}
    
    By \eqref{eq: Case 1. eq1} and \eqref{eq: Case 1. eq2}, we conclude that 
    \begin{equation*}\label{eq: case 1 bound on gromov product}
        (\iota(x)|\iota(y))_{\iota(v)}<M_1+\delta.
    \end{equation*}
    This finishes case 1, and as explained before, this gives: 
    \begin{equation}\label{eq: case 1 conclusion}
    d_X(\iota(x),\iota(y))\geq \frac{1}{K}d_{\tild\Gamma}(x,y)-C'-2M_1-2\delta
    \end{equation}
    
    \textbf{Case  2 - $d_X(\iota(v),\iota(v'))\leq 2M_1+\delta$, and $h.v\neq v'$.}

    Denote the path between $v$ and $v'$ by $\gamma$.    
    Recall that by the Morse Lemma, $[\iota(v),\iota(v')]_X$ is $M_0$-close to $\iota(\gamma)$ for $M_0=M_0(K,C)$.
    Suppose for contradiction that 
    \begin{equation*}
        (\iota(x)|\iota(y))_{\iota(v)}>2M_0+4\delta+3\quad \text{and} \quad          (\iota(x)|\iota(y))_{\iota(v')}>2M_0+4\delta+3. 
    \end{equation*}
    Let $p\in [\iota(x),\iota(v)]_X\subseteq \iota(\tild e)$ and $q\in[\iota(y),\iota(v')]_X\subset\iota(\tild e')$ be such that 
    \begin{equation}\label{eq: Case 2 - eq 1}
    d_X(p,\iota(v))=d_X(q,\iota(v'))=M_0+4\delta+1
    \end{equation}
    We claim that $d_X(p,q)\leq 4\delta$.  
    Since $d_X(p,\iota(v))<(\iota(x)|\iota(y))_{\iota(v)}$, $p$ is $\delta$-close to some point $p'\in [\iota(y),\iota(v)]_X$.  
    By \cref{obs: minimal edge implies shortest path}, $[\iota(p),\iota(v)]_X\subseteq \iota(\tild e)$ is the shortest path between $\iota(p)$ and $\iota(\tild\Gamma_1)$, and therefore
    $$d_X(p',\iota(\tild\Gamma_1))\geq d_X(p,\iota(v))-\delta>M_0+3\delta.$$
    Since $[\iota(v),\iota(v')]$ is contained in the $M_0$-neighborhood of $\iota_1(\tild\Gamma_1)$, 
    $p'$ cannot be $\delta$-close to a point in $[\iota(v),\iota(v')]$. The triangle with vertices $\iota(y),\iota(v),\iota(v')$ is $\delta$-thin, so $p'$ is $\delta$-close to some $p''\in [\iota(y),\iota(v')]_X$. Hence, 
    \begin{equation}\label{eq: Case 2 - eq 2}
        d(p,p'')\le 2\delta.
    \end{equation}
    
    By \cref{obs: minimal edge implies shortest path}, $[\iota(y),\iota(v')]$ is the shortest path between $\iota(y)$ and $\iota(\tild\Gamma_1)$, and therefore $$d_X(p'',\iota(v'))\geq d_X(p'',\iota(v))\geq d_X(p,\iota(v))-2\delta.$$
    Similarly, since $[\iota(p),\iota(v)]$ is the shortest path between $p$ and $\tild\Gamma_1$,
    $$d_X(p,\iota(v))\geq d_X(p,\iota(v'))\geq d_X(p'',\iota(v'))-2\delta.$$ 
    The last two inequalities and \eqref{eq: Case 2 - eq 1} give
    $$d_X(p'',q)=|d_X(p'',\iota(v'))-d_X(q,\iota(v'))|<2\delta$$
    And by \eqref{eq: Case 2 - eq 2}
    \begin{equation}\label{eq: Case 2 eq 3}
        d_X(p,q)\leq d_X(p,p'')+d_X(p'',q)\leq 4\delta.
    \end{equation}
    We have that $\ell(e)\geq2M_0+4\delta+3>2d(p,\iota(v))$. This means that $$d_X(h\cdot p,\iota(v'))>\frac{1}{2}\ell(e)>d_X(q,\iota(v')).$$
    Since $q,h\cdot p,\iota(v')$ lie in the same geodesic, we get
    \begin{equation}\label{eq: Case 2 eq 4}
        d_X(\iota(v'),h\cdot p)= d_X(\iota(v'),q)+d_X(q,h\cdot p)
    \end{equation}
    Combining \eqref{eq: Case 2 - eq 1},\eqref{eq: Case 2 eq 3}, and \eqref{eq: Case 2 eq 4} we get
    \begin{align*}
        d_X(p,h\cdot p) &\le d_X(p,q) + d_X(q,h\cdot p) \\ 
        &\le d_X(p,q)- d_X(\iota(v'),q) + d_X(\iota(v'),h\cdot p) \\
        &\le 4\delta - (M_0+4\delta+1) + d_X(\iota(v'),h\cdot p) \\
        &< d_X(\iota(v'),h\cdot p).
    \end{align*}
    We have now discovered that the point $h.p$ is closer to its translate $p$ then it is to the vertex $\iota(v')$. As we will show next, this means we can improve $\frC$ by replacing the non-separating edge $e$ with a balloon. 

    Let $\Gamma'=(\Gamma-e)\bigcup({e_1\cup u\cup e_2})$ where $u$ is a vertex, $e_1$ connects $\pi_\Gamma(v)$ to $u$, and $e_2$ is a loop at $u$. Set $\ell(e_1)=d_X(p,\iota(v))$ and $\ell(e_2)=d_X(p,h\cdot p)$. Define a quotient map $F:\Gamma\to\Gamma'$ by subdividing $e$ into four segments, mapping the first to $e_1$, mapping the second segment to $e_2$, the third to $\bar e_1$ (i.e. $e_1$ with its opposite orientation) and mapping the fourth segment to $\pi_\Gamma \circ \gamma$. $F$ is a homotopy equivalence -- a homotopy inverse can be constructed by sending $e_1$ to $v$ and $e_2$ to the concatenation of $e$ and $\gamma$. Now, we want an $H$-core structure for $\Gamma'$. Since $F$ is a homotopy equivalence, the action $H\actson_\rho \tild\Gamma$ induces an action of $H\actson_{\rho'}\tild{\Gamma'}$ once we fix a lift $\tild F$ of $F$. We note that there is a bijection between lifts of $\Gamma_1\subset\Gamma$ to $\tild\Gamma$ and lifts of $F(\Gamma_1)\subset\Gamma'$ to $\tild\Gamma'$. Now to define $\iota'$, we want to map lifts of $\Gamma_1$ in the same way that $\iota$ maps lifts of $\Gamma_1$. The remaining parts of $\tild{\Gamma'}$ are lifts of the balloon. We want lifts of $u$ to be mapped to points in the orbit of $p$, and lifts of $e_1$ and $e_2$ to be mapped isometrically to the relevant geodesics. This gives us a well defined $H$-equivariant map $\iota':\tild{\Gamma'}\to X$, and thus $\frC'=(\Gamma',\rho',\iota')$ is an $H$-core.
    We now want to show it is an improvement of $\frC$. In \cref{def: improvement}, property 1 follows from the definition of $F$, properties 2 and 3 follow from the definitions of $\iota',\rho'$, and property 4 follows from the fact that $$\ell(e_1)+\ell(e_2)=d_X(p,\iota(v))+d_X(p,h\cdot p)<d_X(p,\iota(v))+d_X(h\cdot p, \iota(v'))=\ell(e).$$
    We have found an improvement of $\frC$ at $e$, which is a contradiction to minimality. Therefore, we conclude that $(\iota(x)|\iota(y))_{\iota(v)}\leq M_0+3\delta$ or $(\iota(x)|\iota(y))_{\iota(v')}\leq M_0+3\delta$.
    
    \textbf{Case 3 - $d_X(\iota(v),\iota(v'))\leq 2M_1+\delta$, and $h.v=v'$.}
    
    In this case, $h\in H_1=\Stab(\tild\Gamma_1)$. We would like to use case 1 by looking at a translation of $\tild{e}$ by a power of $h$.

    Recall that we want to bound $D:=(\iota(x)|\iota(y))_{\iota(v)}$. We may assume that $D>M_0+3\delta$, as otherwise we are done. Let $p\in [\iota(x),\iota(v)]_X,q\in[\iota(y),\iota(v')]_X$ such that $$d_X(p,\iota(v))=d_X(q,\iota(v'))=D$$
    Since $D>M_0+3\delta$, just like in the previous case, we have that $d_X(p,q)\leq 4\delta$. However, in this case, $q=h\cdot p$. Therefore, for all $m\in \bbN$,
    \begin{equation}\label{Case 3 eq upper}
        d_X(p,h^m\cdot p) \le m d_X(p,h \cdot p) \le  4\delta m.
    \end{equation}

    On the other hand, the translation length of any non-trivial element in $H_1$ in its action on $\tild \Gamma$ is at least 1, and so we have $d_{\tild\Gamma}(v,h^m.v)\ge m$. The map $\iota_1 = \iota | _{\tild \Gamma_1}$ is assumed to be a $(K,C)$-quasi-isometric embedding.
    This implies that
    \begin{equation*}
    d_X(\iota(v),h^m\cdot\iota(v))=d_X(\iota(v),\iota(h^m.v))\ge \frac{1}{K}m-C.
    \end{equation*}
    Thus, for $m=K(2M_1+\delta +C+1)$ we have
    \begin{equation*}
    d_X(\iota(v),h^m.\iota(v))> 2M_1+\delta.
    \end{equation*}
    Suppose $p=\iota(s)$ for $s$ in $\tild e$. We have
    \begin{equation*}
        d_{\tild\Gamma}(s,h^m.s)=2d_{\tild\Gamma}(s,v)+d_{\tild\Gamma}(v,h^m.v)>2d_{\tild\Gamma}(s,v)=2D
        .
    \end{equation*}
    The points $s$ and $h^m.s$ satisfy Case 1, so by \eqref{eq: case 1 conclusion},
    \begin{equation}\label{eq: Case 3 eq 1}
        d_X(p,h^m\cdot p)\geq \frac{1}{K}d_{\tild\Gamma}(s,h^m.s)-C'-2M_1-2\delta > \frac{2}{K}D-C'-2M_1 -2\delta.
    \end{equation}
    Combining \eqref{Case 3 eq upper} and \eqref{eq: Case 3 eq 1} we get 
    $$  \frac{1}{K}2D-C'-2M_1 -2\delta < 4\delta m$$
    This gives $$(\iota(x)|\iota(y))_{\iota(v)}=D < \tfrac12 K ( C'+2M_1 +2\delta +4\delta m)$$
    This upper bound depends only on $K,C$ and so we are done.
\end{proof}
\begin{lemma}\label{lem: full map is local qi}
        For all $K,C$ there exists $C'$ such that if in \cref{setup}, $\frC_1,\frC_2$ are $(K,C)$-good, $\frC$ is $(K,C')$-good, $\iota$ is an $\ell(\tild e)$-local quasi isometric embedding.
\end{lemma}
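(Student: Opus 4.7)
The plan is to reduce the local quasi-isometry statement to \cref{lem: expanding by two copies of minimal edge} using only the tree geometry of $\tild\Gamma$. Since $\iota$ maps each edge isometrically to a geodesic, it is automatically $1$-Lipschitz, so the upper inequality $d_X(\iota(x),\iota(y)) \le d_{\tild\Gamma}(x,y)$ is free; the content of the lemma is the matching lower bound for pairs $x,y$ at distance at most $\ell(\tild e)$.

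Fix $x,y\in\tild\Gamma$ with $d_{\tild\Gamma}(x,y)\le \ell(\tild e)$ and consider the unique geodesic $[x,y]$ in $\tild\Gamma$. The key observation is that every lift of $e$ has length exactly $\ell(\tild e)$, so this geodesic can fully traverse at most one lift of $e$. This yields two cases. First, if $[x,y]$ traverses no full lift of $e$, then $x$ and $y$ lie in a set of the form $\tild e_x\cup\tild\Gamma'\cup\tild e_y$, where $\tild\Gamma'$ is a single component of $\tild\Gamma - \pi_\Gamma\ii(e)$ and $\tild e_x,\tild e_y$ are the (possibly absent, possibly coinciding) lifts of $e$ containing $x$ and $y$, both attached to $\tild\Gamma'$. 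Second, if $[x,y]$ traverses one full lift $\tild e_0$ of $e$, it consumes $\ell(\tild e)$ of the at most $\ell(\tild e)$ total length available, forcing $x,y$ to be exactly the two endpoints of $\tild e_0$; then $d_X(\iota(x),\iota(y)) = \ell(\tild e) = d_{\tild\Gamma}(x,y)$ trivially satisfies any QI bound.

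In the first case I use $H$-equivariance of $\iota$ to translate by a suitable $h \in H$ so that $\tild e_x = \tild e$ and $\tild\Gamma' = \tild\Gamma_i$ for some $i\in\{1,2\}$. Such an $h$ exists because all lifts of $e$ are $H$-translates of $\tild e$, and once $\tild e_x$ is sent to $\tild e$ the component $\tild\Gamma'$ must land on $\tild\Gamma_1$ or $\tild\Gamma_2$ (the only two components adjacent to $\tild e$). If $\tild e_y = \tild e_x$ then $x,y$ lie on a single lift of $e$ and the bound is immediate since $\iota$ is isometric on each edge; if $\tild e_y$ is absent, \cref{lem: expanding by minimal edge} applies; otherwise \cref{lem: expanding by two copies of minimal edge} applies to the translated configuration, yielding a constant $C''=C''(K,C)$. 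Taking $C'$ to be the largest constant produced by these subcases completes the proof. The only non-routine aspect is the $H$-equivariant bookkeeping in the first case, which is forced by the way translates of $\tild\Gamma_1,\tild\Gamma_2$ and $\tild e$ tile $\tild\Gamma$; beyond this the argument is simply a case check against the geometry of geodesics in the tree.
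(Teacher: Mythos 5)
Your proof is correct and follows essentially the same route as the paper: observe that a geodesic of length at most $\ell(\tild e)$ meets at most one component of $\tild\Gamma-\pi_\Gamma\ii(e)$ and at most two partial lifts of $e$, translate this configuration by an element of $H$ onto $\tild e\cup\tild e'\cup\tild\Gamma_i$, and invoke \cref{lem: expanding by two copies of minimal edge}. You are in fact slightly more careful than the paper in handling the boundary case where the geodesic traverses one full lift of $e$ (forcing $x,y$ to be its endpoints), which the paper's phrasing glosses over.
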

\begin{proof}
    Let $x,y\in\tild\Gamma$ such that $d_{\tild\Gamma}(x,y)\leq \ell (\tild e)$. Up to an isometry, we may assume $x\in\tild e\cup\tild\Gamma_1\cup\tild\Gamma_2$. Without loss of generality, $x\in \tild e\cup\tild\Gamma_1$. Since $d_{\tild\Gamma}(x,y)\leq \ell (\tild e)$, the path between $x$ and $y$ cannot contain a full edge in the orbit of $\tild e$. This means that $x,y\in\tild e\cup\tild e'\cup \tild\Gamma_1$ for some lift $\tild e'$ of $e$ whose endpoint is in $\tild \Gamma_1$. By \cref{lem: expanding by two copies of minimal edge}, there exists $C'$ (that depend only on $K,C$) such that $$\frac{1}{K}d_{\tild\Gamma}(x,y)-C' \le d_X(\iota(x),\iota(y))\le Kd_{\tild\Gamma}(x,y)+C'.$$ 
\end{proof}
\begin{theorem} (\cite[Ch.3 Th\'eor\`eme 1.4]{coornaert1990geometrie})\label{thm: loc-to-glob}
    For all $K,C$ there exist $L,K',C'$ such that any $L$-local $(K,C)$-quasi geodesic is a $(K',C')$-quasi geodesic.
\end{theorem}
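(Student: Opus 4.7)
The plan is to follow the classical proof of this local-to-global stability, originally due to Gromov. It has two main stages: first one shows that for $L$ sufficiently large, every $L$-local $(K,C)$-quasi-geodesic stays uniformly close to a true geodesic joining its endpoints (a Morse-type estimate), and then one uses this closeness to extract a global $(K',C')$-QG inequality.

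I would begin by reducing to the case where the local quasi-geodesic $c\colon[a,b]\to X$ is continuous and arc-length parameterized; this perturbs the constants $K,C$ only by bounded amounts. The global upper inequality $d_X(c(s),c(t)) \le K'|s-t|+C'$ is then straightforward: partition $[s,t]$ into consecutive sub-intervals of length at most $L$, apply the local $(K,C)$-QG estimate on each piece, and telescope using the triangle inequality.

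For the lower bound I would fix a geodesic $\gamma=[c(a),c(b)]_X$ and let $D:=\max_{t} d_X(c(t),\gamma)$ be realized at some parameter $t_0$. Choose parameters $t_1 \le t_0 \le t_2$ to be the nearest times with $d_X(c(t_i),\gamma) = D/2$ (truncating to endpoints if no such times exist), shrunk further so that $t_2 - t_1 \le L$. Since $c|_{[t_1,t_2]}$ is then a bona fide $(K,C)$-quasi-geodesic, the standard Morse lemma for quasi-geodesics in $\delta$-hyperbolic spaces places $c|_{[t_1,t_2]}$ within distance $M_0 = M_0(K,C,\delta)$ of the geodesic segment $[c(t_1),c(t_2)]_X$. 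A thin-quadrilateral argument on the vertices $c(t_1), p_1, p_2, c(t_2)$, where $p_i$ is a nearest-point projection of $c(t_i)$ onto $\gamma$, then forces $D$ to be bounded by some $R = R(K,C,\delta)$, provided $L$ is large enough.

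Once the image of $c$ is trapped in the $R$-neighborhood of $\gamma$, the global lower QG bound follows by projecting a regularly spaced partition $c(s), c(s+L/2),\ldots,c(t)$ onto $\gamma$. The local QG property prevents consecutive projections from backtracking along $\gamma$ by more than a bounded amount (otherwise a short sub-arc of $c$ would connect two points close in $X$ but far apart in parameter, violating the local lower bound). Summing the linear forward progress along $\gamma$ yields $d_X(c(s),c(t)) \ge |s-t|/K' - C'$. The main obstacle is the bookkeeping in the Morse-type step: one must simultaneously choose $L$ large enough to absorb the $\delta$-hyperbolicity error terms, keep $[t_1,t_2]$ short enough to count as a genuine quasi-geodesic, and extract uniform final constants $K',C'$ from the accumulated additive errors.
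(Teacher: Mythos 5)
The paper does not prove this statement; it is quoted verbatim from Coornaert--Delzant--Papadopoulos, so there is no internal proof to compare against. Your overall architecture (first a Morse-type statement that the local quasi-geodesic stays uniformly close to the geodesic joining its endpoints, then a projection argument for the global lower bound) is one of the standard routes, and your upper-bound telescoping and final projection step are fine in outline. The problem is the Morse step itself.

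The gap is in how you bound $D=\max_t d_X(c(t),\gamma)$. The thin-quadrilateral estimate only gives you $d_X(c(t_0),\gamma)\le M_0+2\delta+\max_i d_X(c(t_i),\gamma)$, so the argument is non-circular precisely because you normalized $d_X(c(t_i),\gamma)=D/2$, which yields $D\le 2(M_0+2\delta)$. But this requires $t_2-t_1\le L$ so that $c|_{[t_1,t_2]}$ is a genuine $(K,C)$-quasi-geodesic, and nothing in your setup guarantees that the nearest times at which the distance to $\gamma$ drops to $D/2$ lie within a window of length $L$ around $t_0$: the path could stay at distance greater than $D/2$ from $\gamma$ for a full window of length $L/2$ on one side of $t_0$. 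Your proposed fix --- ``shrunk further so that $t_2-t_1\le L$'' --- destroys exactly the property the estimate needs, since after shrinking you only know $d_X(c(t_i),\gamma)\le D$, and the conclusion degenerates to $D\le D+M_0+2\delta$. Handling this bad case is the actual content of the theorem; it cannot be done with thin quadrilaterals alone and requires either the exponential-divergence (logarithmic) lemma for paths avoiding a ball around a geodesic, or, as in Coornaert--Delzant--Papadopoulos, bypassing the Morse step entirely via the chain lemma: sample points at parameter spacing comparable to $L/2$, observe that each consecutive triple lies in a window of length $L$ and hence has Gromov product $(x_{i-1}|x_{i+1})_{x_i}$ bounded by a constant $M_0(K,C,\delta)$ while $d_X(x_{i-1},x_i)\ge L/2K-C$ is large, and then invoke the estimate that such a chain makes definite linear progress. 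I would recommend restructuring the proof around that chain argument, after which the Morse property for local quasi-geodesics follows as a corollary rather than serving as the first step.
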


\begin{proof}[Proof of \cref{prop: constants exist}]
By \cref{lem: full map is local qi}, there exist $C'_0$ for which $\iota$ is an $\ell(\tild e)$-local $(K,C'_0)$-quasi-isometric-embedding. 
By \cref{thm: loc-to-glob}, there exist $L,K',C'$ such that every $L$-local $(K,C'_0)$-quasi-geodesic is a $(K',C')$-quasi-geodesic. Therefore, if $\ell(\tild e)\geq L$, then $\iota$ maps geodesics in $\tild\Gamma$ to $(K',C')$-quasi-geodesic. In other words, it is a $(K',C')$-quasi-isometric embedding.
\end{proof}

\begin{lemma}\label{lem: adding-baspoint}
    For all $K,C$ there exist $(K',C')$ such that if $H$ has a $(K,C)$-good core, then $H$ is $(K',C')$-arboreal (i.e. it has a $(K',C')$-good based core).
\end{lemma}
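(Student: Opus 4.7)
My plan is to attach a single ``tail'' edge to the given $(K,C)$-good unbased core $\frC = (\Gamma, \rho, \iota)$ in order to produce a based $H$-core. After subdividing the edges of $\Gamma$ at every integer point along the image geodesics—which preserves $\iota$ as a $(K,C)$-QI embedding, since the underlying metric tree and the map are unchanged—I choose a vertex $\tilde v_0 \in \tilde \Gamma$ minimizing $L_0 := |\iota(\tilde v_0)|$ over all vertices; the minimum is attained because there are finitely many $H$-orbits of vertices and each orbit realizes its minimum distance to $1$. If $L_0 = 0$ we are already done, so assume $L_0 \geq 1$. Let $\Gamma' = \Gamma \cup e$ where $e$ is a new edge of length $L_0$ from $v_0 := \pi_\Gamma(\tilde v_0)$ to a new vertex $*$, and extend $\iota$ equivariantly by sending $\tilde *$ to $1$ and mapping $\tilde e$ isometrically to a chosen geodesic $[\iota(\tilde v_0), 1]_X$. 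The immersion condition of Definition~\ref{def: core} at $\tilde v_0$ is automatic: each incident edge of $\tilde \Gamma$ at $\tilde v_0$ has image starting in a direction $s \in S$ with $|g_0 s| \geq |g_0|$ by the minimality of $\tilde v_0$, whereas the tail's first step is in a direction $t \in S$ with $|g_0 t| = |g_0| - 1$, so necessarily $s \neq t$.

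I will then show $\frC'_* = (\Gamma', \tilde *, \rho', \iota')$ is $(K',C')$-good with $K', C'$ depending only on $K, C$. I view $\frC'$ as in Setup~\ref{setup} by splitting along $e$: the two resulting sub-cores are $\frC_1 = \frC$, which is $(K,C)$-good by hypothesis, and $\frC_2 = (\{*\}, \mathrm{triv}, \tilde * \mapsto 1)$, the trivial $\{1\}$-core, which is vacuously $(K,C)$-good. For Proposition~\ref{prop: constants exist} to apply, it remains to check that $e$ is minimal in the sense of Definition~\ref{def: improvement}. The three improvement moves all fail: replacement by a shorter edge is impossible since $\ell(e) = L_0 = d_X(\iota(\tilde v_0), 1)$ already realizes the distance between the endpoint images; identification of $*$ with another vertex would force that vertex to map to $1$, contradicting the minimality of $L_0$; and the balloon move requires $e$ to be non-separating, while $e$ is a leaf of $\Gamma'$.

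With minimality established, I split into two regimes. If $L_0 \geq L$, where $L = L(K,C)$ is the threshold from Proposition~\ref{prop: constants exist}, that proposition directly yields the desired $(K',C')$-goodness. If $L_0 < L$, the tail is bounded by $L$, and I verify $(K'',C'')$-goodness by a direct triangle-inequality argument: for $x, y \in \tilde\Gamma'$, projecting each to its nearest point in $\tilde\Gamma$ costs at most $L$ in each coordinate, so the lower QI estimate follows from the original $(K,C)$-QI on $\tilde\Gamma$ at the cost of an additive $O(L)$ error. Taking the worse of the constants from the two regimes, all depending only on $K$ and $C$, completes the proof.

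The main obstacle I anticipate is the rigorous verification of minimality of $e$ against all three types of simple folding (particularly ruling out identifications precisely); the immersion issue, which at first sight looks delicate, is in fact handled automatically by the minimizing choice of $\tilde v_0$.
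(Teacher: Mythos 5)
Your proposal is correct and follows essentially the same route as the paper's proof: attach a leaf edge from the closest point of the core's image to $1$, observe that this edge is minimal and splits the new core into the original $(K,C)$-good core and the trivial core, then apply \cref{prop: constants exist} when the edge is long and a direct additive-error estimate when it is short. The only difference is that you spell out the verification of minimality and of the immersion condition at the attaching vertex (via the integer subdivision), which the paper leaves implicit.
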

\begin{proof}
    We start with a subgroup $H$ and a $(K,C)$-good $H$-core $\frC_0=(\Gamma_0,\rho_0,\iota_0)$. We want to find a based $H$-core $\frC_*=(\Gamma,\tild*,\rho,\iota)$ which is $(K',C')$-good, for $(K',C')$ that do not depend on $H$ or $\frC_0$. If $1\in\iota(\tild\Gamma_0)$, set $\tild*=\iota^{-1}(1)$, and if needed, make $*=\pi_{\Gamma_0}(\tild*)$ a vertex.
    
    Now, suppose $1\notin\iota(\tild\Gamma_0)$. Let $\frC_*=(\Gamma,\tild *,\iota,\rho)$ be the based core obtained from $\frC_0$ by attaching a leaf: $\Gamma=\Gamma_0\cup\{*\}\cup e$, where a chosen $\tild *\in \tild\Gamma$ is mapped to 1, and $e$ is defined by the fact that a lift $\tild e$ is mapped isometrically to the shortest path between $1$ and $\iota(\tild\Gamma_0)$; The map $\iota$ is then well-defined, and $\rho$ extends naturally from $\rho_0$. The edge $e$ is a minimal edge, and $\Gamma$ decomposes along $e$ into $\Gamma_0$ and the trivial core $\{*\}$. By \cref{prop: constants exist}, there exist $K',C',L$ (that only depend on $(K,C)$ such that if $\ell(e)\geq L$ then $\frC_*$ is $(K',C')$-good. Therefore, if $\ell(e)\geq L$ we are done. If not, then all we did to $\frC_0$ in order to get $\frC_*$ is add very short leaves, and we get that $\frC_*$ is $(K,C+2L)$-good.
\end{proof}

\section{Uniform constants}\label{sec: uniform constants}
We have seen in the previous section that if a core $\frC$ decomposes into good subcores $\frC_1,\frC_2$ and a sufficiently long minimal edge $\tild e$, then $\frC$ is good. This gives us a strategy of proving \cref{thm: quasiconvex is arboreal} - we take a core  $\frC$ that has minimal size (and is therefore unfoldable). If all of the edges are short, then $\frC$ is one of finitely many possible cores. If one of the edges is long, we can look at the adjacent subcores and use \cref{prop: constants exist}. For this to work, we need to make sure that the longest edge is either long enough for \cref{prop: constants exist} or is short enough. This leads us to the following definition:
\begin{definition}\label{def: compatible}
    Let $\calC =(K_n, C_n)_{n=0}^\infty, \calL=(L_n)_{n=1}^\infty$ be two lists of constants. We say $\calC$ and $\calL$ are compatible if for all $n$:
    \begin{enumerate}
        \item Suppose $\frC=(\Gamma,\rho,\iota)$ is an $H$-core with at most $n$ edges ($H$ could be any free quasi-convex subgroup). If the length of every edge of $\Gamma$ is at most $L_n$, then $\frC$ is $(K_n,C_n)$-good.
        \item Suppose $\frC=(\Gamma,\rho,\iota)$ is an $H$-core with at most $n$ edges ($H$ could be any free quasi-convex subgroup). Let $\tild e\in\tild\Gamma$ be a minimal edge, and let $\frC_1,\frC_2$ are the adjacent subcores. Then if $\ell(\tild e)>L_n$, and $\frC_1,\frC_2$ are $(K_{n-1},C_{n-1})$-good, $\frC$ is $(K_n,C_n)$-good.
    \end{enumerate}
\end{definition}
\begin{claim}\label{cl: compatible lists}
    There exists a pair of lists $\calC, \calL$ that are compatible.    
\end{claim}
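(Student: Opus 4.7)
The plan is to construct $\calC$ and $\calL$ by induction on $n$. For the base case $n = 0$ the only connected $H$-core with no edges is a single vertex, trivially a QI embedding, so $(K_0, C_0) = (1, 0)$ works and Condition 2 is vacuous.

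For the inductive step, assume $(K_{n-1}, C_{n-1})$ is defined; I would determine $L_n, K_n, C_n$ as follows. Condition 2 fits the hypothesis of \cref{prop: constants exist}: applying that proposition with $(K, C) = (K_{n-1}, C_{n-1})$ yields constants $(K', C', L)$, and setting $L_n = L$ ensures that a core with at most $n$ edges, a minimal edge of length $\ge L_n$, and $(K_{n-1}, C_{n-1})$-good adjacent subcores (as in \cref{setup}) is automatically $(K', C')$-good.

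The substance of the step is Condition 1: exhibiting uniform $(K'', C'')$ such that every $H$-core with at most $n$ edges, all of length $\le L_n$, is $(K'', C'')$-good. My strategy is a finiteness argument exploiting that $G$ is finitely generated, so $X$ is locally finite. Given such a core $\frC = (\Gamma, \rho, \iota)$, translating by a suitable $g \in G$ (which conjugates $H$ and replaces $\iota$ by $g \circ \iota$ without changing its QI constants) one may arrange $\iota(\tild v) = 1$ for some chosen vertex $\tild v \in \tild \Gamma$. Fixing a spanning tree $T \subset \Gamma$ and a lift $\tild T \subset \tild \Gamma$ through $\tild v$, every vertex of $\tild T$ is at $\tild \Gamma$-distance at most $n L_n$ from $\tild v$, hence its image lies in the finite set $B_X(1, n L_n) \cap G$. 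The remaining edges of $\Gamma$ outside $T$ are each encoded by a $G$-element sending a vertex of $\tild T$ to another vertex at $X$-distance $\le L_n$, again a finite set of choices. Combined with the finite list of combinatorial types of connected graphs with at most $n$ edges (and at most $2n$ vertices), this gives only finitely many cores meeting Condition 1 up to translation in $G$.

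For each of the finitely many equivalence classes of such cores, the underlying subgroup $H^{(i)}$ is quasiconvex by the very definition of a core, so the orbit map $H^{(i)} \to X$ is a QI embedding. Since $\iota^{(i)}$ is $1$-Lipschitz and the $H^{(i)}$-orbit of the basepoint is coarsely dense in $\tild \Gamma^{(i)}$ (with constant bounded by $n L_n$), a standard triangle-inequality argument upgrades this to show that $\iota^{(i)}$ itself is a $(K^{(i)}, C^{(i)})$-QI embedding for some constants. Let $K'' = \max_i K^{(i)}$, $C'' = \max_i C^{(i)}$, and then set $K_n = \max(K', K'')$, $C_n = \max(C', C'')$. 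I expect the main technical obstacle to be the finiteness bookkeeping---showing cleanly that the combinatorial type of $\Gamma$, the spanning tree lift, and the gluing data each contribute only finitely many possibilities---after which the conclusion is immediate from quasi-convexity and the definition of compatibility.
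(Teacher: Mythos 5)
Your proposal follows essentially the same route as the paper's proof: induction on $n$, with Condition 2 handled by \cref{prop: constants exist} and Condition 1 by a local-finiteness argument showing there are only finitely many cores with at most $n$ short edges up to translation, each of which is good with some constants because the underlying subgroup is quasiconvex by definition. Your version merely spells out the finiteness bookkeeping and the per-core goodness step in more detail than the paper does; the argument is correct.
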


\begin{proof}
We build $\calC$, $\calL$ by induction. For the base case, we take $K_0=1,C_0=0$ - a core with $0$ edges is just a vertex, and it is mapped isometrically. Now, given $K_n,C_n$ we want to construct $K_{n+1},C_{n+1},L_{n+1}$. 

By \cref{prop: constants exist} there exist $L_{n+1}, K_{n+1}',C_{n+1}'$ such that if 
a core $\frC$ decomposes along a minimal edge $\tild e$ to $(K_n,C_n)$-good subcores $\frC_1,\frC_2$ as in $\S\ref{sec: expanding quasi isometry}$ and $\ell(\tild e)>L_{n+1}$, then $\frC$ is $(K_{n+1}^t,C_{n+1}^t)$-good. 

To satisfy the first condition of \cref{def: compatible} consider all the cores with at most $n$ edges, each shorter than $L_n$. There are only finitely many of them (up to post-composing with an isometry and conjugating the subgroup). This is true since $X$ is localy finite - the image of the tree of such a core has a fundamental domain that lies in a ball of bounded radius. Since there are only finitely many, we can find constants $K_{n+1}'',C_{n+1}''$ such that they are all $(K_{n+1}'',C_{n+1}'')$-good. 

Setting $K_{n+1} = \max\{K_{n+1}',K_{n+1}''\}, C_{n+1} = \max\{ C_{n+1}',C_{n+1}''\}$ we get that $(K_i,C_i)_{i=1}^{n+1},(L_i)_{i=1}^{n+1}$ are compatible up to $n+1$.
\end{proof}
\begin{fact}\label{fact: finitely many graphs}
    Suppose $H$ is a free group. Up to homeomorphism, there are only finitely many graphs $\Gamma$ such that:
    \begin{enumerate}
        \item $\Gamma$ is finite.
        \item $\Gamma$ has no leaves.
        \item $\pi_1(\Gamma)\simeq H$.
    \end{enumerate}
\end{fact}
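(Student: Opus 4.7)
Let $r = \rank(H)$. The plan is to use two elementary invariants, the Euler characteristic and the degree sum, to bound the number of vertices and edges of $\Gamma$, after first reducing to a graph with no vertex of valence~$2$. Since there are only finitely many combinatorial graphs with a bounded number of edges and vertices, finiteness follows.

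First I would dispose of the low-rank cases. If $r=0$, then $\Gamma$ is a finite contractible graph with no leaves, which forces $\Gamma$ to be a single point. If $r=1$, then $\Gamma$ is homotopy equivalent to $S^1$ and has no leaves, so up to homeomorphism $\Gamma$ is a circle. Now assume $r \ge 2$.

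The key reduction is that two finite graphs that differ only by subdivision of edges (equivalently, by insertion or removal of vertices of degree~$2$) are homeomorphic. Hence, up to homeomorphism, I may assume that $\Gamma$ has no vertex of degree~$2$; combined with the no-leaves hypothesis this means $\deg(v) \ge 3$ for every vertex $v$. From this one obtains
\begin{equation*}
  2|E(\Gamma)| \;=\; \sum_{v \in V(\Gamma)} \deg(v) \;\ge\; 3\,|V(\Gamma)|,
\end{equation*}
so $|V(\Gamma)| \le \tfrac{2}{3} |E(\Gamma)|$. On the other hand, since $\Gamma$ is connected and $\pi_1(\Gamma) \cong F_r$, the Euler characteristic gives $|E(\Gamma)| - |V(\Gamma)| = r - 1$. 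Combining these two inequalities yields $\tfrac{1}{3}|E(\Gamma)| \le r-1$, hence
\begin{equation*}
  |E(\Gamma)| \le 3(r-1), \qquad |V(\Gamma)| \le 2(r-1).
\end{equation*}

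Since these bounds depend only on $r$, there are only finitely many isomorphism classes of such finite graphs (allowing multi-edges and loops), and therefore only finitely many homeomorphism classes of graphs satisfying conditions~(1)--(3). I do not expect any real obstacle here; the only point that requires a small amount of care is the initial reduction step removing degree-$2$ vertices, which must be justified with reference to the ``up to homeomorphism'' clause.
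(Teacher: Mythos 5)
Your argument is correct, and the paper itself offers no proof of this statement --- it is recorded as a standard \emph{Fact} --- so your Euler characteristic computation (suppress degree-$2$ vertices up to homeomorphism, then combine $2|E|=\sum_v\deg(v)\ge 3|V|$ with $|E|-|V|=r-1$ to get $|E|\le 3(r-1)$, $|V|\le 2(r-1)$) is exactly the standard justification the authors are taking for granted. The only implicit hypotheses worth flagging are that $\Gamma$ is connected and $H$ is finitely generated, both of which hold automatically in the paper's setting where $\Gamma$ arises as the quotient of a tree by a free cocompact action of $H$.
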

\begin{corollary}\label{cor: bounded number of edges}
    For all $r$ there exists $n(r)$ such that: If $\Gamma$ is a finite coarse graph with no leaves, and $\pi_1(\Gamma)\simeq F_r$, then $\Gamma$ has at most $n(r)$ edges.
\end{corollary}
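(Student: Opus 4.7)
The plan is to deduce the bound directly from \cref{fact: finitely many graphs} together with an elementary Euler characteristic computation that makes the constant explicit. Since \cref{fact: finitely many graphs} asserts only finiteness up to homeomorphism, I first need to interpret ``coarse'' as meaning that no vertex of $\Gamma$ has valence $2$ with two distinct incident edges (since such vertices can be eliminated by merging the two edges without changing the homeomorphism type). A ``coarse'' representative in each homeomorphism class then has all vertices of valence $\neq 2$, with the only exception being an isolated self-loop, which can occur only when $\Gamma$ itself is that loop by connectedness.

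Next, I would fix $r$ and let $\Gamma$ satisfy the hypotheses. For $r=0$, $\Gamma$ is a point with $0$ edges; for $r=1$, coarseness and the no-leaves hypothesis force $\Gamma$ to be the single loop with $1$ edge. For $r \ge 2$, the no-leaves hypothesis excludes valence-$1$ vertices and coarseness excludes valence-$2$ vertices (since the isolated-loop case is ruled out by $r \ge 2$), so every vertex of $\Gamma$ has valence at least $3$. Connectedness of $\Gamma$ together with $\pi_1(\Gamma) \simeq F_r$ gives the Euler characteristic identity
\[
V(\Gamma) - E(\Gamma) \;=\; 1 - r.
\]
Finally, handshaking yields $2E(\Gamma) = \sum_{v} \deg(v) \ge 3 V(\Gamma)$, hence $V(\Gamma) \le \tfrac{2}{3}E(\Gamma)$. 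Substituting $V(\Gamma) = E(\Gamma) + 1 - r$ gives $E(\Gamma) + 1 - r \le \tfrac{2}{3}E(\Gamma)$, so $E(\Gamma) \le 3(r-1)$. Setting $n(r) = \max\{1,\, 3(r-1)\}$ therefore works uniformly over all $\Gamma$ satisfying the hypotheses.

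There is no real obstacle here: the statement is essentially a quantitative packaging of \cref{fact: finitely many graphs}, and the Euler characteristic / handshaking argument is standard. The only minor subtlety is to treat the low-rank cases $r \in \{0,1\}$ separately, since they are the cases in which the $\deg \ge 3$ bound fails.
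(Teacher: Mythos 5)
Your argument is correct, but it takes a more direct route than the paper, which states \cref{cor: bounded number of edges} as an immediate consequence of \cref{fact: finitely many graphs}: finitely many homeomorphism types of finite leafless graphs with $\pi_1\simeq F_r$, together with the observation that a coarse representative of each homeomorphism class has a fixed number of edges, yields a (non-explicit) bound by taking the maximum. Your Euler characteristic / handshaking computation bypasses the Fact entirely and produces the explicit constant $n(r)=\max\{1,3(r-1)\}$, which is arguably preferable: it is self-contained, quantitative, and makes transparent exactly where the hypotheses enter (no leaves rules out valence $1$, coarseness rules out valence $2$, and $V-E=1-r$ comes from $\pi_1(\Gamma)\simeq F_r$). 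One caveat: the paper never formally defines ``coarse graph,'' so your reading of it as ``no vertex of valence $2$ with two distinct incident edges'' is an interpretive choice, though it is the standard one and the one the paper evidently intends (it is what makes the reduction ``we may assume $\Gamma_H$ is a coarse graph'' in \cref{lem: uniform-core-no-base} harmless). Your separate treatment of $r\in\{0,1\}$, including the isolated self-loop exception, is the right way to handle the degenerate cases where the valence-$\ge 3$ bound fails.
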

\begin{lemma}\label{lem: uniform-core-no-base}
    For all $r\in\bbN$ there exist $K,C$ such that every quasiconvex free subgroup of rank $r$ has a $(K,C)$-good core.
\end{lemma}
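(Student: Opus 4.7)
The plan is to combine the compatible lists $(K_n, C_n), (L_n)$ from \cref{cl: compatible lists} with the edge bound from \cref{cor: bounded number of edges} via an induction on the number of edges of a minimum-size core. Set $K = K_{n(r)}$ and $C = C_{n(r)}$.

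Given $H \le G$ quasiconvex free of rank $r$, I would choose an $H$-core $\frC = (\Gamma, \rho, \iota)$ of minimum size. Existence can be justified by reducing to cores whose vertices map to vertices of $X$, making every edge length a positive integer and the set of core sizes a discrete subset of $\bbZ$ bounded below by $0$. Minimum size yields two useful properties: $\Gamma$ has no leaves (pruning a leaf produces a strictly smaller $H$-core), and every edge of $\Gamma$ is minimal in the sense of \cref{def: improvement} (else a strict improvement would decrease size). The no-leaves property together with \cref{cor: bounded number of edges} then bounds the number of edges of $\Gamma$ by $n(r)$.

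The heart of the argument is an induction on $n \ge 0$ proving: for every free subgroup $H' \le G$ and every $H'$-core $\frC'$ with at most $n$ edges, all minimal, $\frC'$ is $(K_n, C_n)$-good. The base case $n=0$ is immediate (a single vertex, $H' = 1$). For the inductive step, let $\frC'$ be such a core. If every edge of $\Gamma'$ has length at most $L_n$, condition 1 of \cref{def: compatible} yields the conclusion directly. Otherwise, pick an edge $e$ with $\ell(e) > L_n$ and decompose $\frC'$ along $e$ as in \cref{setup} into subcores $\frC'_1, \frC'_2$ with at most $n-1$ edges each. The crucial observation is that any improvement of $\frC'_i$ at an edge $e' \in \Gamma'_i$ extends verbatim to an improvement of $\frC'$ at $e'$: a simple folding is local to the folded edge and does not touch $e$, so conditions 1--3 of \cref{def: improvement} transfer. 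Consequently, the minimality of every edge in $\frC'$ forces minimality of every edge in $\frC'_1, \frC'_2$. By the inductive hypothesis the subcores are $(K_{n-1}, C_{n-1})$-good, and since the minimal edge $e$ has $\ell(e) > L_n$, condition 2 of \cref{def: compatible} gives that $\frC'$ is $(K_n, C_n)$-good. Applying the induction with $n = n(r)$ to our $\frC$ finishes the proof.

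The most delicate point I anticipate is verifying, for the extension argument, that a type 2 improvement of $\frC'_i$ (vertex identification) extends to an improvement of $\frC'$ even when the identified pair contains the attachment vertex $v_i$ of $e$; this works because condition 2 of \cref{def: improvement} forces identified vertices of a valid improvement to already have coinciding images in $X$, so reattaching $e$ at the identified vertex keeps $e$ as the same geodesic in $X$. The other slightly subtle step is the existence of a minimum-size core, handled via the integer-length reduction above.
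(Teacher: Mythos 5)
Your proposal is correct and follows essentially the same route as the paper: take a minimum-size (hence leafless, all-edges-minimal) core, bound its number of edges via \cref{cor: bounded number of edges}, and run the compatible lists of \cref{cl: compatible lists} through a descent on the number of edges, using condition 1 for short longest edges and condition 2 plus the fact that improvements of subcores extend to improvements of the whole core otherwise. Your version merely formalizes the paper's recursive "remove the longest edge" step as an explicit induction and supplies the existence-of-a-minimal-core and improvement-extension details that the paper asserts without proof.
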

\begin{proof}
     By \cref{cl: compatible lists}, there exists a pair of compatible lists $\calC=(K_n,C_n),\calL=(L_n)$. We will prove that every free quasiconvex subgroup of rank $r$ is $(K_{n(r)},C_{n(r)})$-arboreal, for $n(r)$ from \cref{cor: bounded number of edges}. Suppose $H$ is a quasiconvex free subgroup of rank $r$. let $\frC_H=(\Gamma_H,\rho_H,\iota_H)$ be a minimal $H$-core (i.e. $\sigma(\frC_H)$ is minimal among all $H$-cores). We may assume $\Gamma_H$ is a coarse graph, and so by \cref{cor: bounded number of edges} it has at most $n$ edges. We claim that $\frC_H$ is $({K}_{n(r)},{C}_{n(r)})$-good. Let $e\in \Gamma_H$ be an edge of maximal length (among edges of $\Gamma_H$). Since $\Gamma_H$ is minimal, $e$ is minimal (if there was an improvement of $\frC_H$ at $e$, in particular there would have been an $H$-core that is smaller then $\frC_H$). If $\ell(e)\le L_{n(r)}$ we are done by \cref{def: compatible}. Assume $\ell(e)>L_{n(r)}$. Suppose $\frC_1,\frC_2$ are the subcores adjacent to some lift of $e$. If we prove that $\frC_1,\frC_2$ are both $(K_{n(r)-1},C_{n(r)-1})$-good, we finish by part 2 of \cref{def: compatible}. To do that, we can apply the same idea. Every edge of $\frC_1,\frC_2$ is minimal, since an improvement to one of the subcores is also an improvement to $\frC$. So we can continue the process of removing the longest edge, and looking at the subcores. Since the number of edges reduces, this process must finish, thus proving that $\frC_H$ was indeed $(K_{n_r},C_{n_r})$-good.
\end{proof}
\begin{proof}[Proof of \cref{thm: quasiconvex is arboreal}]
Let $r\in\bbN$. By \cref{lem: uniform-core-no-base}, there exist $K_0,C_0$ such that every free quasiconvex subgroup of rank $r$ has a $(K_0,C_0)$-good core. By \cref{lem: adding-baspoint}, there exist $(K,C)$ such that every subgroup that has a $(K_0,C_0)$-good core is $(K,C)$-arboreal. In particular, every free quasiconvex subgroup of rank $r$ is $(K,C)$-arboreal.
\end{proof}

\section{Maps between cores}\label{sec: maps between cores}
Let $G$ be a hyperbolic group. Suppose that $H_1\leq H_2\leq G$, and suppose that $H_1$ and $H_2$ are $(K,C)$-arboreal, with based cores $\frC_1=(\Gamma_1,\tild*_1,\rho_1,\iota_1),\frC_2=(\Gamma_2,\tild*_2,\rho_2,\iota_2)$.
Assume that every point in $\tild\Gamma_1$ lies on a path between two points in the orbit of $\tild*_1$, i.e. $\text{Hull}(H_1.\tild*_1)=\tild{\Gamma_1}$ (otherwise, replace $\tild{\Gamma_1}$ with the subtree $\text{Hull}(H_1.\tild*_1)$).
We would like to construct a map $\psi:\Gamma_1\to\Gamma_2$.
\begin{observation}\label{lem: embeddings are close}
There exists $D=D(K,C,\delta)$ such that
$\iota_1(\tild\Gamma_1)\subseteq\calN_D(\iota_2(\tild\Gamma_2))$.    
\end{observation}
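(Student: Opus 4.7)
The plan is to exploit the fact that the two base points both map to $1 \in X$, which forces the $H_1$-orbit of $\iota_1(\tild *_1)$ to coincide with the $H_1$-orbit of $\iota_2(\tild *_2)$, and then appeal to the Morse lemma to compare quasi-geodesics in $\tild\Gamma_1$ and $\tild\Gamma_2$ with matching endpoints.

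More concretely, given $x\in\tild\Gamma_1$, use the hypothesis $\tild\Gamma_1=\text{Hull}(H_1.\tild *_1)$ to find elements $h,h'\in H_1$ such that $x$ lies on the geodesic $\tild\gamma_1$ in $\tild\Gamma_1$ from $h.\tild *_1$ to $h'.\tild *_1$. By $H_i$-equivariance and the base-point normalization we have $\iota_1(g.\tild *_1)=g\cdot \iota_1(\tild *_1)=g$ and $\iota_2(g.\tild *_2)=g$ for every $g\in H_2\supseteq H_1$. In particular, the inclusion $H_1\le H_2$ lets us view $h,h'$ as elements of $H_2$ and consider the tree-geodesic $\tild\gamma_2$ in $\tild\Gamma_2$ joining $h.\tild *_2$ and $h'.\tild *_2$.

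Now $\iota_1(\tild\gamma_1)$ and $\iota_2(\tild\gamma_2)$ are both $(K,C)$-quasi-geodesics in the $\delta$-hyperbolic space $X$, and their endpoints agree (both equal $h$ and $h'$ respectively). By the Morse lemma in hyperbolic geometry (see e.g.\ Coornaert--Delzant--Papadimitriou), there is a constant $M_0=M_0(K,C,\delta)$ such that any $(K,C)$-quasi-geodesic is Hausdorff-close, within $M_0$, to a geodesic between its endpoints. Applying this to both quasi-geodesics and using the triangle inequality,
\begin{equation*}
\iota_1(\tild\gamma_1)\subseteq \calN_{2M_0}\bigl(\iota_2(\tild\gamma_2)\bigr)\subseteq \calN_{2M_0}\bigl(\iota_2(\tild\Gamma_2)\bigr).
\end{equation*}
In particular $d_X(\iota_1(x),\iota_2(\tild\Gamma_2))\le 2M_0$. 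Since $x\in\tild\Gamma_1$ was arbitrary, setting $D:=2M_0$ yields $\iota_1(\tild\Gamma_1)\subseteq \calN_D(\iota_2(\tild\Gamma_2))$, and $D$ depends only on $K,C,\delta$ as required.

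There is no real obstacle here; the only point that deserves care is making sure the two quasi-geodesics share the same pair of endpoints in $X$, and that relies solely on the normalization $\iota_i(\tild *_i)=1$ together with equivariance, which transports the $H_1$-orbits on the two trees to the same orbit $H_1\cdot 1\subseteq X$.
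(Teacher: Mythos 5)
Your proof is correct and follows essentially the same route as the paper: place $x$ on a tree-geodesic between two orbit points of $\tild *_1$, use equivariance and the base-point normalization to match endpoints with a tree-geodesic in $\tild\Gamma_2$, and apply the Morse lemma to the two $(K,C)$-quasi-geodesics. Your write-up is in fact slightly more careful than the paper's in spelling out why the endpoints coincide.
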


\begin{proof}
The $\iota_1$-image of every vertex of $\tild{\Gamma_1}$ is also in the image of $\iota_2$. Let $\tild x\in\tild{\Gamma_1}$ be a point. Then $\tild x$ lies in a path between two points in the orbit of $\tild*_1$, which we will denote by $h.\tild*_1,g.\tild*_1$. This means that $\iota_1(\tild x)$ is in a $(K,C)$-quasi-geodesic between $h=\iota_1(h.\tild*_1)$ and $g=\iota_1(h.\tild*_1)$. 
The image $\iota_2(\tild\Gamma_2)$ also has a $(K,C)$-quasi-geodesic between $h$ and $g$. By the Morse lemma, quasi-geodesics are close, which means there exists $D=D(K,C)-\delta$ such that $x\in \calN_D(\iota_2(\tild\Gamma_2))$. 
\end{proof}
\begin{lemma} \label{lem: map between cores}
There exist $K',C'$ that depend only on $K,C,\delta$, and a continuous map 
$\psi:\Gamma_1\to\Gamma_2$ which induces the inclusion of $H_1$ into $H_2$ in $\pi_1$, and whose lift to the universal covers
$\tild\psi:\tild{\Gamma}_1\to \tild\Gamma_2$ is a $(K',C')$-q.i embedding.
\end{lemma}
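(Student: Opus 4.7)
The plan is to build $\tild\psi: \tild\Gamma_1 \to \tild\Gamma_2$ first on vertices using \cref{lem: embeddings are close}, then extend to edges in a way compatible with the cores' embeddings into $X$. For each $H_1$-orbit of vertices in $\tild\Gamma_1$, I would pick a representative $\tild v$ and choose some $\tild w \in \tild\Gamma_2$ with $d_X(\iota_1(\tild v), \iota_2(\tild w)) \le D$ (available by \cref{lem: embeddings are close}), setting $\tild\psi(\tild v) := \tild w$; for the orbit of $\tild*_1$ I take $\tild\psi(\tild*_1) := \tild*_2$, which is admissible since $\iota_1(\tild*_1) = 1 = \iota_2(\tild*_2)$. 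I then extend $H_1$-equivariantly, noting that $H_1$ acts on $\tild\Gamma_2$ via its inclusion in $H_2$. On each edge $\tild e$ of $\tild\Gamma_1$ with endpoints $\tild u, \tild v$, I extend $\tild\psi$ to send $\tild e$ onto the unique tree geodesic from $\tild\psi(\tild u)$ to $\tild\psi(\tild v)$ in $\tild\Gamma_2$, again equivariantly.

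The resulting $\tild\psi$ is continuous and $H_1$-equivariant, so it descends to $\psi : \Gamma_1 \to \Gamma_2$ with $\psi(*_1) = *_2$. That $\psi_* : \pi_1(\Gamma_1,*_1) \to \pi_1(\Gamma_2,*_2)$ is the inclusion $H_1 \hookrightarrow H_2$ follows because a loop at $*_1$ representing $h \in H_1$ (via $\rho_1$) lifts to a path from $\tild*_1$ to $h\cdot\tild*_1$ in $\tild\Gamma_1$, which $\tild\psi$ sends to a path from $\tild*_2$ to $h\cdot\tild*_2$ in $\tild\Gamma_2$, descending to the loop at $*_2$ representing $h \in H_2$. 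At the vertex level, the quasi-isometric embedding bounds are almost immediate: using that $\iota_1, \iota_2$ are $(K,C)$-quasi-isometric embeddings (and in particular $1$-Lipschitz, since edges map isometrically to geodesics) together with the $D$-bound $d_X(\iota_1(\tild v), \iota_2(\tild\psi(\tild v))) \le D$ and the triangle inequality in $X$, one derives
\[
\tfrac{1}{K} d_{\tild\Gamma_1}(\tild u, \tild v) - (C + 2D) \;\le\; d_{\tild\Gamma_2}(\tild\psi(\tild u), \tild\psi(\tild v)) \;\le\; K\, d_{\tild\Gamma_1}(\tild u, \tild v) + K(2D + C)
\]
for all vertices $\tild u, \tild v \in \tild\Gamma_1$.

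The main technical obstacle is passing from vertex estimates to arbitrary points on edges: a naive linear parametrization can badly distort distances when an edge of $\tild\Gamma_1$ is short while the tree path between its endpoint images is long, or conversely. To handle this I would invoke the Morse Lemma on each edge $\tild e$: $\iota_1(\tild e)$ is a genuine geodesic in $X$, while $\iota_2$ of the tree path in $\tild\Gamma_2$ between $\tild\psi(\tild u)$ and $\tild\psi(\tild v)$ is a $(K,C)$-quasi-geodesic whose endpoints lie within $D$ of those of the geodesic. The two curves therefore fellow-travel at a distance $M' = M'(K,C,\delta)$, and I can parametrize $\tild\psi$ on $\tild e$ so that $d_X(\iota_1(x), \iota_2(\tild\psi(x))) \le M'$ for every $x \in \tild e$. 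With this choice, the triangle-inequality argument used at the vertex level extends uniformly to all pairs of points in $\tild\Gamma_1$, yielding constants $K', C'$ depending only on $K, C, \delta$, as required.
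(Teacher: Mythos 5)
Your argument is correct and shares its skeleton with the paper's proof: both rest on \cref{lem: embeddings are close}, an $H_1$-equivariant ``nearest point of $\iota_2(\tild\Gamma_2)$'' assignment, and the same triangle-inequality sandwich comparing $d_{\tild\Gamma_1}$, $d_X$ and $d_{\tild\Gamma_2}$. Where you genuinely diverge is in handling edge interiors (the real issue, as you correctly note, since edges of $\Gamma_1$ may be long). The paper sidesteps it by defining the nearest-point map $\tild\psi_0$ on \emph{every} point of a fundamental domain of $\tild\Gamma_1$ --- not just on vertices --- so that $d_X(\iota_1(x),\iota_2(\tild\psi_0(x)))\le D$ holds everywhere, and then restores continuity by subdividing $\tild\Gamma_1$ into edges of length at most $1$ and interpolating linearly along geodesics of $\tild\Gamma_2$; this step uses no hyperbolicity beyond \cref{lem: embeddings are close}. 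You instead define the map only on the original vertices and recover pointwise closeness along a long edge $\tild e$ by Morse-lemma fellow-traveling between the geodesic $\iota_1(\tild e)$ and the quasi-geodesic image of the tree path between the endpoint images. That works and yields the same kind of constants, but two small points: (i) to make the fellow-traveling parametrization of $\tild e$ continuous you still need an interpolation device (e.g.\ pick good targets on a $1$-net of $\tild e$ and interpolate along the tree path, at the cost of a bounded additive error), which is essentially the paper's subdivision trick in disguise, so your route is not really lighter; (ii) your displayed vertex-level lower bound should carry multiplicative constant $1/K^2$ rather than $1/K$ (the paper gets $K_0'=K^2$) --- harmless, since the statement only requires some $K',C'$ depending on $K,C,\delta$.
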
 
\begin{proof}
We will construct the map $\psi$ in two steps:

\textbf{Step 1.}  Let us construct an $H_1$-equivariant $(K'_0,C'_0)$-quasi-isometric embedding $\tild\psi_0: \tild\Gamma_1\to \tild\Gamma_2$.

Take a connected (strict) fundamental domain $\Omega$ for the action of $H_1$ on $\tild\Gamma_1$.
For $x\in\Omega$, define $\tild\psi_0(x)$ to be a point $y$ on $\tild\Gamma_2$ such that $d_{X}(\iota_1(x),\iota_2(y))$ is minimal (this distance is less than $D$ by the result of \cref{lem: embeddings are close}).
Any other point in $\tild\Gamma_1$ is of the (unique) form $g.x$ for some $x\in\Omega$, so define $\tild\psi_0(g.x)=g.\tild\psi_0(x)$. $\tild\psi_0$ is well defined, and $H_1$-equivariant.
All that remains is to find constants and show that $\tild\psi_0$ is a quasi-isometric embedding. Let $x,x'\in \Omega$, denote $y=\tild\psi_0(x), y'=\tild\psi_0(x')$, and let $g,g'\in H_1$. 
We bound $d_{\tild\Gamma_2}(g.y,g'.y')$:
\begin{align*}
    d_{\tild\Gamma_2}(g.y,g'.y')&\leq K\cdot d_{X}(g.\iota_2(y),g'.\iota_2(y'))+KC\\&\leq
    K\cdot (d_{X}(g.\iota_2(y), g.\iota_1(x))+d_{X}(g.\iota_1(x),g'.\iota_1(x'))\\ &\quad+d_{X}(g'.\iota_1(x'),g'.\iota_2(y')))+KC
    \\
    &\leq K\cdot d_{X}(g.\iota_1(x),g'.\iota_1(x'))
    +2KD+KC\\
    &\leq K^2d_{\tild\Gamma_1}(g.x,g'.x')+2KC+2KC\\
    \\
    d_{\tild\Gamma_2}(g.y,g'.y')&\geq K^{-1}\cdot d_{X}(g.\iota_2(y),g'.\iota_2(y'))-K^{-1}C\\
    &\geq K^{-1}\cdot (d_{X}(g.\iota_1(x), g'.\iota_1(x'))-d_{X}(g.\iota_1(x),g.\iota_2(y))\\&\quad-d_{X}(g'.\iota_1(x'),g'.\iota_2(y')))-K^{-1}C\\&\geq K^{-1}\cdot d_{X}(g.\iota_1(x),g'.\iota_1(x'))-2K^{-1}D-K^{-1}C
    \\&\geq K^{-2}d_{\tild\Gamma_1}(g.x,g'.x')-2K^{-1}D-2K^{-1}C
\end{align*}
Therefore, $\tild\psi_0$ is a $(K'_0,C'_0)$-q.i. embedding for $K'_0=K^2, C'_0=2KD+2KC$.

\textbf{Step 2.}  Let us use $\tild\psi_0$ to construct an $H_1$-equivariant $(K',C')$-quasi-isometric embedding $\tild\psi:\tild\Gamma_1\to \tild\Gamma_2$ which is also continuous.

Subdivide $\tild\Gamma_1$, such that the new edges have length which is less than or equal to $1$ (and do it in such that the action remains simplicial, by subdividing edges that are in the same orbit in the same way). Denote the set of vertices (after subdividing) by $V'$, and the set of edges by $E'$. For $v\in V'$, define $\tild\psi(v)=\tild\psi_0(v)$. For an edge $e\in E'$ between vertices $v$ and $v'$, define $\tild\psi$ such that it would map $e$ to the geodesic between $\tild\psi(v)$ and $\tild\psi(v')$ linearly. Each point on $e$ is at distance at most $\frac{1}{2}$ from one of $v$ and $v'$, and each point on $\tild\psi(e)$ is at distance at most $\frac{K+C}{2}$ from one of $\tild\psi(v),\tild\psi(v')$ (since the distance between $\tild\psi(v)$ and $\tild\psi(v')$ is at most $K'_0+C'_0$). Let $x,y\in \tild\Gamma_1$ be such that the vertices closest to them are $v_x, v_y$ respectively. Then:
\begin{align*}
    d_{\tild\Gamma_2}(\tild\psi(x),\tild\psi(y)) &\leq d_{\tild\Gamma_2}(\tild\psi(v_x),\tild\psi(v_y))+2\cdot \frac{K'_0+C'_0}{2} \\
    & = d_{\tild\Gamma_2}(\tild\psi_0(v_x),\tild\psi_0(v_y))+K'_0+C'_0 \\
    & \leq K'_0\cdot d_{\tild\Gamma_1}(v_x,v_y)+K'_0+2C'_0\\
    & \leq K'_0\cdot d_{\tild\Gamma_1}(x,y)+3K'_0+2C'_0\\
    \\
    d_{\tild\Gamma_2}(\tild\psi(x),\tild\psi(y)) & \geq d_{\tild\Gamma_2}(\tild\psi(v_x),\tild\psi(v_y))-2\cdot\frac{K'_0+C'_0}{2}\\
    & = d_{\tild\Gamma_2}(\tild\psi_0(v_x),\tild\psi_0(v_y))-K'_0-C'_0\\
    & \geq {K'_0}^{-1}\cdot d_{\tild\Gamma_1}(v_x,v_y)-K'_0-2C'_0\\
    & \geq {K'_0}^{-1}\cdot d_{\tild\Gamma_1}(x,y)-2{K'_0}^{-1} -K'_0-2C'_0\\
    & \geq {K'_0}^{-1}\cdot d_{\tild\Gamma_1}(x,y) -3K'_0-2C'_0
\end{align*}
So, for $K'=K'_0, C'=3K'_0 +2C'_0$, $\tild\psi$ is a $(K',C')-$q.i. embedding.

The map $\tild\psi$ is continuous, and it is $H_1$-equivariant, since $H_1\actson \tild\Gamma_2$ by isometries ($\tild\psi(g.v)=g.\tild\psi(v)$ for a vertex $v$, and it works for the rest of the points since the action sends geodesics to geodesics). Therefore, there is a well-defined map $\psi:\Gamma_1\to\Gamma_2$ which induces the inclusion $H_1\into H_2$, and lifts to $\tild\psi$.
\end{proof}
\section{Bounding cores}\label{sec: bounding cores}

Let $A$ be a finite subset of $G$. The displacement of $A$ at $1$ is defined by
$$\tau_1(A)=\sum_{a\in A} d_X(1, a).$$

\begin{remark}
    Given a constant $\alpha>0$, the set $\{A\subset G| A \text{ is finite and }\tau_1(A)<\alpha\}$ is finite.
\end{remark}

\begin{lemma} \label{lem: not many small groups}
Let $G$ be a hyperbolic group with a generating set $S$. Then given $\alpha,K,C>0$, there are finitely many $(K,C)$-arboreal subgroups with core $\frC$ such that $\sigma(\frC)\leq\alpha$.
\end{lemma}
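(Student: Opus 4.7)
The plan is to extract from the given core $\frC$ a generating set $A\subset G$ for $H$ whose displacement $\tau_1(A)$ is uniformly bounded in terms of $\alpha$ and $G$, and then invoke the preceding Remark to conclude that only finitely many such $A$ (hence only finitely many $H$) can arise. The based structure coming from $(K,C)$-arboreality will be crucial, since it pins one point of $\iota(\tild\Gamma)$ to the identity $1\in X$ and thereby ties the generators of $H$ to $1$.

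Concretely, I would fix $H$ together with its $(K,C)$-good based core $\frC_*=(\Gamma,\tild*,\rho,\iota)$, satisfying $\iota(\tild*)=1$ and $\sigma(\Gamma)\le\alpha$. Choose a spanning tree $T\subseteq\Gamma$ and list the remaining edges $e_1,\dots,e_r$. For each $i$ let $\gamma_i$ be the loop at $*=\pi_\Gamma(\tild*)$ obtained by travelling along $T$ to one endpoint of $e_i$, crossing $e_i$, and returning along $T$; under the isomorphism $\pi_1(\Gamma,*)\simeq H$ these loops correspond to a free generating set $h_1,\dots,h_r$ of $H$.

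Since $T$ is a tree of total length $\sigma(T)\le\sigma(\Gamma)\le\alpha$, one has $\mathrm{diam}(T)\le\sigma(T)\le\alpha$, so $\ell(\gamma_i)\le 2\,\mathrm{diam}(T)+\ell(e_i)\le 3\alpha$. Lifting $\gamma_i$ to $\tild\Gamma$ starting at $\tild*$, the lift ends at $h_i\cdot\tild*$ and has the same length. Since $\iota$ sends edges isometrically to geodesics of $X$, it is $1$-Lipschitz along this lift, whence
\[ d_X(1,h_i)\;=\;d_X(\iota(\tild*),\,\iota(h_i\cdot\tild*))\;\le\;\ell(\gamma_i)\;\le\;3\alpha. \]
Thus $A:=\{h_1,\dots,h_r\}$ generates $H$, is contained in $B_X(1,3\alpha)\cap G$, and satisfies $\tau_1(A)\le 3\alpha\cdot|B_X(1,3\alpha)\cap G|$, a constant depending only on $\alpha$ and $G$. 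By the Remark, there are finitely many such $A$, hence finitely many subgroups $H=\langle A\rangle$. There is no substantive obstacle here: the only ingredient requiring thought is the length bound $\ell(\gamma_i)\le 3\alpha$, which is immediate from $\mathrm{diam}(T)\le\sigma(T)\le\sigma(\Gamma)$. It is worth noting that the argument uses neither the hyperbolicity of $G$ nor the $(K,C)$-goodness of the core, only that $\Gamma$ is a finite graph, that $\iota$ is isometric on edges, and that $X$ is locally finite.
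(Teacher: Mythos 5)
Your proposal is correct and follows essentially the same route as the paper: pick a based core, choose a spanning tree, read off a free generating set from the complementary edges as loops at the basepoint, bound the word lengths by the size of the core, and conclude via the finiteness of sets of bounded displacement. The only (cosmetic) differences are that you bound $d_X(1,h_i)$ using the $1$-Lipschitz property of $\iota$ rather than the quasi-isometry upper bound, and your closing observation that neither hyperbolicity nor $(K,C)$-goodness is actually needed is accurate.
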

\begin{proof}
Let $H\leq G$ be a $(K,C)$-arboreal subgroup of rank $r$ with a based $H$-core $\frC=(\Gamma,\tild*,\rho,\iota).$
We can find a free generating set $A$ for $H=\pi_1(\Gamma,*)$, in the following way:

choose a spanning tree $T$ of $\Gamma$. Let $e$ be an edge outside $T$ between $v_1$ and $v_2$. Denote by $a_e$ the loop obtained by concatenating the path on $T$ between $*=\pi_\Gamma(\tild*)$ and $v_1$, $e$, and the path on $T$ between $v_2$ and $*$.

Now, define $A=\{a_e|e$ is an edge outside of $T\}$

Each loop in $A$ passes through each edge twice at most. Therefore, the length of each loop is less then $2\sigma(\frC)$.

There is a bound on $\tau_1(A)$ (in $G$):
\begin{align*}
    \tau_1(A)&=\sum_{a\in A} d_X(1, a)\\
    &\leq K\cdot(\sum_{a\in A} d_T(\tild*, a.\tild*)) + rC\\
    &= K\cdot(\sum_{a\in A} \ell(a)) + rC\\
    &\leq 2\cdot K\cdot r \cdot\sigma(\frC)+rC
\end{align*}

So, every $(K,C)$-arboreal subgroup with $\sigma(\frC)\leq\alpha$ has a generating set $A$ with $\tau(A)\leq2Kr\alpha +rC$. There are finitely finite sets with $\tau_1\leq2Kr\alpha+rC$, and we are done.
\end{proof}



\begin{proof}[Proof of \cref{thm: main_plus}]
Let $H$ be a quasiconvex free subgroup of $G$, and let $k\in \bbN$. Without loss of generality assume that $\rk(H)\le k$. 
By \cref{thm: quasiconvex is arboreal}, there exist $K,C$ such that any free quasiconvex subgroup $H'$ of rank at most $k$ is $(K,C)$-arboreal.
In particular $H$ has a $(K,C)$-good core $\frC=(\Gamma,\rho,\iota)$.

Now, assume $H'$ is a free quasiconvex subgroup of rank at most $k$ which contains $H$, and let 
$\frC'=(\Gamma',\rho',\iota')$ denote its $(K,C)$-good core. By \cref{lem: map between cores}, there exists constants $K',C'$ (which depend only on $K,C$) and a map $\psi:\Gamma\to \Gamma'$, that in $\pi_1$ induce the inclusion $H\leq H'$, and its lift to the universal cover $\tild\psi$ is a $(K',C')$-quasi-isometric embeddings.

If, in addition, $H$ is not contained in a free factor of $H'$ then the map $\psi$ is surjective.
For an edge $e$ of $\Gamma$, we have $\ell(\psi(e))\leq K'\cdot\ell(e)+C'$. Since the images under $\psi$ of edges in $\Gamma$ cover $\Gamma'$, this implies $\sigma(\frC')\leq K'\cdot\sigma(\frC)+C'$.

We found a bound on $\sigma(\frC')$ which depends only on $K,C$. 
By \cref{lem: not many small groups}, there are only finitely many such subgroups $H'$.
\end{proof}

\section{Ascending chains}\label{sec: ascending chains}
We are now back to discussing ascending chains of subgroups, in an effort to prove \cref{thm: ascending quasiconvex stabilizes}.
In view of \cref{thm: main_plus}, we would like to assume that $H_{i-1}$ is not contained in a proper free factor of $H_{i}$, and that $H_1$ is not contained in a proper free factor of $H_{i}$. Bering and Lazarovich proved these assumptions are possible  in \cite{bering2021ascending}. In this section, we include the proofs for completeness.

\begin{lemma}\label{lem: ascneding not in free factors}
Let $G$ be a group, and let $H_1\leq H_2\leq...\leq G$ be an ascending chain of free subgroups of constant rank. Then there exists a chain $K_1\leq K_2\leq...\leq G$ of free subgroups of constant rank, such that $K_i$ is not contained in a proper free factor of $K_{i+1}$. Furthermore ${H_i}$ stabilizes if and only if ${K_i}$ stabilizes.
\end{lemma}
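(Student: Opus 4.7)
The plan is to construct $\{K_i\}$ using the minimal free factor operation in free groups. Recall that in a finitely generated free group $F$, the intersection of two free factors is again a free factor, so every finitely generated subgroup $H \le F$ admits a unique minimal free factor of $F$ containing it, which I denote $\overline{H}^F$. This operation will drive the whole construction.

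First I would build a preliminary chain $\{K_i\}$ inductively. Set $K_1 := H_1$, and given $K_i$ choose an index $j_{i+1} \ge i+1$ and put $K_{i+1} := \overline{\langle K_i, H_{i+1}\rangle}^{H_{j_{i+1}}}$. By construction $K_i \le K_{i+1}$ and $H_i \le K_i$ for all $i$, and each $K_i$ is a finitely generated free group of rank at most $r$, being a free factor of $H_{j_i}$.

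The key property --- that $K_i$ is not contained in a proper free factor of $K_{i+1}$ --- will follow from the transitivity of free factors: since a free factor of a free factor is a free factor, any proper free factor $L \subsetneq K_{i+1}$ is also a proper free factor of $H_{j_{i+1}}$, and if it contained both $K_i$ and $H_{i+1}$ it would contradict the minimality defining $K_{i+1}$. The delicate case is when $L$ contains $K_i$ but not $H_{i+1}$, which I would handle either by iterating the construction or by a clever choice of $j_{i+1}$ so that every free factor of $H_{j_{i+1}}$ containing $K_i$ also contains $H_{i+1}$. Once this is arranged, the ranks $\rank(K_i)$ lie in $\{0, 1, \ldots, r\}$, so a pigeonhole argument lets us pass to a subsequence on which the rank is constant, and the chain and non-free-factor-containment properties are preserved along the subsequence.

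Finally, for the equivalence of stabilization: if $\{H_i\}$ stabilizes at some index $N$, then for $i \ge N$ the construction depends only on $H_N$ and directly forces $\{K_i\}$ to stabilize. Conversely, if $\{K_i\}$ stabilizes at some $K_\infty$, then eventually $H_i \le K_\infty$, and since $K_\infty$ is a finitely generated free group, the Takahasi--Higman ACCF theorem for free groups implies that $\{H_i\}$ stabilizes as well. The hardest part of the argument will be arranging the ``not contained in a proper free factor'' property together with constant rank simultaneously; both involve a subtle interplay between the minimal free factor closure and the choice of indices $j_i$, and require an inductive refinement beyond the naive construction.
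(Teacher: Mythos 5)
Your construction does not establish the key property, and you acknowledge this yourself: the ``delicate case'' you defer is not a loose end but the entire content of the lemma. Defining $K_{i+1}$ as the minimal free factor of $H_{j_{i+1}}$ containing $\langle K_i, H_{i+1}\rangle$ only excludes proper free factors of $K_{i+1}$ that contain all of $\langle K_i, H_{i+1}\rangle$; a proper free factor containing $K_i$ alone is precisely what the lemma must rule out, and nothing in your setup prevents one from existing. Neither proposed repair is carried out: it is unclear what ``iterating the construction'' would converge to, and there is no reason a choice of $j_{i+1}$ should exist for which every free factor of $H_{j_{i+1}}$ containing $K_i$ also contains $H_{i+1}$. (Replacing $\langle K_i,H_{i+1}\rangle$ by $K_i$ alone in the closure would fix the non-containment property but destroys $H_{i+1}\le K_{i+1}$, and with it the stabilization equivalence.) A smaller unaddressed point: after passing to a constant-rank subsequence you must check that non-containment for \emph{consecutive} terms survives skipping indices; this requires a Kurosh-type argument as in \cref{lem: first not free factor}. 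Your stabilization equivalence is fine, though Takahasi--Higman is overkill: $H_i\le K_i\le H_{j_i}$ already sandwiches the two chains.

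For comparison, the paper avoids free factor closures entirely and inducts on the rank $r$. If only finitely many indices $i$ have $H_i$ contained in a proper free factor of $H_{i+1}$, pass to a tail and take $K_i=H_i$. Otherwise, for infinitely many $i$ choose a proper free factor $K_i$ of $H_{i+1}$ containing $H_i$; by Grushko $\rk K_i<r$, the $K_i$ form an ascending chain sandwiched between the $H_i$ (so stabilization is equivalent), and after extracting a constant-rank subsequence the induction hypothesis applies to $\{K_i\}$. That argument needs no minimality and no analysis of which free factors contain which subgroups, which is exactly where your proposal gets stuck.
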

\begin{proof}
By induction on the rank $r$ of the groups $H_i$. For $r=1$, the groups $H_i$ do not have proper free factor, and $K_i=H_i$ suffices. Assume $r>1$. If there is a finite number of indices $i$ such that $H_i$ is contained in a proper free factor of $H_{i+1}$, then taking the tail of the sequence ${H_i}$ we are done. Assume that the set of indices $I\subseteq \bbN$ for which it happens is infinite. Then, for each $i\in I$, $H_i$ is contained in some free factor $K_i$ of $H_{i+1}$. By Grushko's theorem, $\rk(K_i)<\rk(H_{i+1})=r$. The sequence $\{K_i\}_{i\in I}$ is ascending. After passing to a subsequence, we may assume $K_i$ have constant rank $r'<r$. Clearly $K_i$ stabilizes if and only if $H_i$ stabilizes, and since $r'<r$, we can continue by induction.
\end{proof}
\begin{lemma} \label{lem: first not free factor}
Let $G$ be a group, and let $H_1\leq H_2\leq...\leq G$ be an ascending chain of free subgroups of constant rank, such that for all $i$, $H_i$ is not contained in a proper free factor of $H_{i+1}$. Then for all $i$, $H_1$ is not contained in a proper free factor of $H_i$.
\end{lemma}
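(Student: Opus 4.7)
The plan is to prove the lemma by induction on $i$. The case $i=1$ is vacuous, and the case $i=2$ is exactly the chain hypothesis applied at $i=1$. For the inductive step, assume $H_1$ is not contained in a proper free factor of $H_{i-1}$, and suppose towards contradiction that $H_1 \le K$ where $K$ is a proper free factor of $H_i$, writing $H_i = K * L$ with $L \ne 1$. (We may assume $H_1 \ne 1$, since otherwise the chain has rank $0$ and the statement is vacuous.) The strategy is to pull the splitting $H_i = K * L$ down to a splitting of $H_{i-1}$ via Kurosh's subgroup theorem, and use the induced splitting to locate $H_1$ inside a proper free factor of $H_{i-1}$.

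First I observe that $H_{i-1}$ is not contained in any conjugate $g K g^{-1}$ or $g L g^{-1}$ with $g \in H_i$. Each such conjugate is a proper free factor of the rewriting $H_i = g K g^{-1} * g L g^{-1}$, so any such containment would directly contradict the chain hypothesis that $H_{i-1}$ is not contained in a proper free factor of $H_i$.

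Next I apply Kurosh's subgroup theorem to $H_{i-1} \le K * L$, obtaining a free-product decomposition of the form
\[
H_{i-1} \;=\; F \;*\; \bigl(\textstyle *_\alpha\, (H_{i-1} \cap g_\alpha K g_\alpha^{-1})\bigr) \;*\; \bigl(*_\beta\, (H_{i-1} \cap h_\beta L h_\beta^{-1})\bigr),
\]
with $F$ free and the $g_\alpha, h_\beta$ ranging over double-coset representatives. Taking $g_\alpha = 1$ yields a factor $H_{i-1} \cap K$, which contains $H_1$ since $H_1 \le H_{i-1}$ and $H_1 \le K$. By the previous paragraph $H_{i-1}$ is not contained in any single one of these conjugate intersections, so the Kurosh decomposition has at least two nontrivial summands; in particular $H_{i-1} \cap K$ is a \emph{proper} free factor of $H_{i-1}$. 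This contradicts the inductive hypothesis and completes the proof.

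The only nontrivial step is the invocation of Kurosh together with the verification that the resulting free-product decomposition is proper; this in turn reduces to the observation that the chain hypothesis forces $H_{i-1}$ to meet both sides of the splitting $H_i = K * L$ nontrivially. Everything else is bookkeeping.
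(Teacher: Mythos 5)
Your proof is correct and takes essentially the same route as the paper's: induction on $i$, Kurosh's subgroup theorem to see that $H_{i-1}\cap K$ is a free factor of $H_{i-1}$, the chain hypothesis to see that it is proper (since $H_{i-1}\not\le K$), and the inductive hypothesis to conclude. The only cosmetic differences are that you phrase the step as a contradiction and record non-containment in all conjugates of $K$ and $L$, whereas the paper only needs $H_{i-1}\not\le K$ itself.
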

\begin{proof}
By induction on $i$. For $i=1,2$ there is nothing to prove.
For $i>2$, suppose $K$ is a proper free factor of $H_i$.
By the Kurosh decomposition, $K\cap H_{i-1}$ is a free factor of $H_{i-1}$. And by the assumption $H_{i-1} \not\le K$, so $K\cap H_{i-1}$ is a proper free factor of $H_{i-1}$. By the induction hypothesis $H_1$ is not contained in a proper free factor of $H_{i-1}$, and so it is not contained in $K\cap H_{i-1}$. It follows that $H_1$ is not contained in $K$.
\end{proof}

\begin{proof}[Proof of \cref{thm: ascending quasiconvex stabilizes}]
    Let $H_1\le H_2 \le \dots$ be an ascending sequence of quasiconvex free subgroups of rank $r$.
By \cref{lem: ascneding not in free factors} we may assume that for all $i$, $H_i$ is not contained in a proper free factor of $H_{i+1}$. By \cref{lem: first not free factor}, $H_1$ is not contained in a proper free factor of $H_i$.
    By \cref{thm: main_plus} there are only finitely many such $H_i$, and so the sequence stabilizes.
\end{proof}

\bibliographystyle{plain}
\bibliography{biblio}
\end{document}